\newtheorem{theorem}{Theorem}[section]
\newtheorem{lemma}{Lemma}[section]
\newtheorem{proposition}{Proposition}[section]
\theoremstyle{definition}
\newtheorem{definition}{Definition}[section]
\theoremstyle{remark}
\numberwithin{equation}{section}
\def\f{\frac}
\def\hf1{^\f{1}{1-\xi^2}}
\def\be{\begin{equation}}
\def\en{\end{equation}}
\def\bs{\begin{split}}
\def\es{\end{split}}
\def\ba{\begin{align}}
\def\ea{\end{align}}
\author[P. Fatheddin]{Parisa Fatheddin}
\address{Department of Mathematics, University of Pittsburgh, Pittsburgh, 15260, USA.}
\email{PAF49@pitt.edu,}
\author[Z. Qiu]{Zhaoyang Qiu}
\address{School of Mathematics and Statistics, Huazhong University of Science and Technology, Wuhan, 430074, China.}
\email{ZHQMATH@163.com }
\title[\emph{Large Deviations for Stochastic Schr\"odinger Equation}]
{\Large{Large Deviations for Nonlinear Stochastic Schr\"odinger Equation}}
\title[\emph{Large Deviations for Stochastic Schr\"odinger Equation}]
{\Large{Large Deviations for Nonlinear Stochastic Schr\"odinger Equation}}
\keywords{Stochastic schr\"{o}dinger equation, Large deviation principle, Weak convergence approach, exit problem.}
\subjclass[2010]{35Q35, 76D05, 35R60, 60F10}
\date{}
\begin{document}
\begin{abstract}
Large deviation principle by the weak convergence approach is established for the stochastic nonlinear Schr\"{o}dinger equation in one-dimension and as an application the exit problem is investigated.
\end{abstract}

\maketitle
\section{Introduction}

The Schr\"odinger equation describes how the quantum state of the physical mechanics changes with time, which is widely used to model many important physical phenomena, such as Taylor-Couette flow, superconductivity, and Bose-Einstein condensation. It is used especially in optics to model laser beam propagation through random media. For more background, we refer the reader to \cite{Andrews, Sulem}.

Because of its importance in applications in physics, the Schr\"odinger equation has been broadly studied. Here we consider the stochastic nonlinear Schr\"odinger equation of the form,
\begin{eqnarray}\label{Schrodinger}
du(t)= i\Delta u(t)dt - \lambda |u(t)|^{2\sigma}u(t)dt +g(t,u(t))dW,
\end{eqnarray}
where $\sigma\in [1,p]$ for all $p> 1$, $\lambda >0$ and $W$ is a Q-Wiener process to be made precise in the subsequent section. Many authors have studied the well-posedness of the  stochastic Schr\"{o}dinger equation which takes several forms depending on the physical situation.   A. Bouard and A. Debussche \cite{Bouardmultiplicative} established the well-posedness of weak solution to equation (\ref{Gautier}) given below with $\sigma=1$ in the case of multiplicative noise using the fixed point argument. In addition, in \cite{BouardH1} they enhanced the result to the energy space $H^1$  in both cases of additive and multiplicative noise. Using the Galerkin approximation coupled with the energy method, W. Grecksch and H. Lisei\cite{Lisei1} proved the variational solution to equation (\ref{Schrodinger}) with the second and third terms on the right hand side multiplied by $-i$ and $i$, respectively, for all $\sigma\geq 1$ and $W$ being the cylindrical Wiener process and also they extended their result to the system driven by both cylindrical Wiener process and fractional Brownian motion. Furthermore, V. Barbu, M. R\"{o}ckner and D. Zhang \cite{Barbu1} established the well-posedness of weak solution to equation,
\begin{eqnarray*}
idu(t)=\Delta u(t) dt +i\mu u(t) dt-\lambda|u(t)|^{\sigma-1} u(t)dt +i u(t)dW.
\end{eqnarray*}
The constitution of linear multiplicative noise makes it possible to reduce the system to a random nonlinear equation with lower-order terms by a rescaling transform, then results in the deterministic setting may be applied directly. As a natural continuation of \cite{Barbu1}, they obtain the well-posedness of solution in the energy space $H^1$ in \cite{Barbu2}.

We achieve the large deviation principle by the weak convergence approach introduced by \cite{BDM,Budhiraja} for (\ref{Schrodinger}). Large deviations is a growing field with applications in communications, finance and statistical mechanics. For more details see \cite{Ellis2, Ganesh, Glasserman}. Regarding the Schr\"odinger equation there were some pioneer works on large deviation principle using the classical method achieved by E. Gautier for the equation,   \begin{equation}\label{Gautier}
 idu(t)= \Delta u(t)dt+\lambda |u(t)|^{2\sigma} u(t) dt +(dW \text{ or } u(t)dW \text{ or } dW^{H}),
\end{equation}
where additive noise was considered in \cite{additive}, linear multiplicative noise in \cite{multiplicative} and fractional noise in \cite{fractional}.  More specifically, the Azencott method was implemented in the case of multiplicative noise by proving the continuity of the controlled equation and achieving the Freidlin-Wentzell inequality. For additive and fractional noise, E. Gautier relies on large deviations already known for centered Gaussian measures and applies the Dawson-G\"{a}rtner theorem. The weak convergence approach implemented here, does not involve technical exponential tail estimates with time discretization making the proof more concise. For some results on large deviation principle by the weak convergence approach see \cite{Bessaih, Ss, Zhai} for stochastic Navier-Stokes equation, \cite{Duan,ours} for the stochastic Boussinesq equation, and \cite{shell} for stochastic shell model.

Before giving the proof of the large deviation principle, we follow the one dimensional setting of \cite{Lisei3} to show the well-posedness of stochastic controlled Schr\"odinger equation directly. Since the noise term is not uniformly bounded in $\epsilon$, the Girsanov transformation theorem can not be applied to obtain this well-posedness result.  Unlike most fluid models, due to the complex nature of the Sc\"{o}dinger equation, we can only achieve an a priori $L^2$-bound, and a higher regularity of the approximate solution cannot be obtained when the It\^{o} formula is applied. Therefore, in order to achieve higher regularity estimates of the approximate solution, we have to impose the $H^1$ regularity on the initial data. Also because of the lack of the smoothing properties of the Schr\"odinger equation, space time white noise cannot be used and we are limited to noise that is white in time and colored in space.

Similar to the result in \cite{shell}, we are only able to prove large deviations on a weaker space $\mathcal{C}([0,T];H)$ than the space of solutions, $\mathcal{C}([0,T];H) \cap L^{p}(0,T;V)$, due to the cancellation of the diffusion term, $i\Delta u$ when the It\^{o} formula is applied. To achieve the compactness in space $\mathcal{C}([0,T];H)$, the convergence, $u^\epsilon\rightarrow u,~ {\rm in }~L^{2}(0,T;H), ~\mathbb{P} ~\mbox{a.s.} $
as $\epsilon\rightarrow 0$, is established in advance in the spirit of \cite{wang}, without using the time discretization estimates developed by \cite{Duan}. Different from \cite{wang}, thanks to Gy\"{o}ngy-Krylov's lemma and uniqueness of solutions, we are able to recover the convergence $u^\epsilon\rightarrow u$ on the original probability space, which makes the proof more direct. Also, we give a detailed proof that the limit $u$ is the solution of the controlled system following the idea of \cite{Bessaih}.

Exit problem may also be considered as a consequence of large deviations as was proved by E. Gautier in \cite{Gautierexit}. Here the probability of exit time being before a given time, $T_{0}$ is studied using the large deviations result by a different method following results by M. Freidlin, A. Wentzell in Chapters 3 and 4 of \cite{Freidlin}.

The article is organized as follows. We begin in Section 2 with notations, deterministic and stochastic preliminaries needed for the rest of the paper and state the main results. Then we focus in Section 3 on establishing the well-posedness of the controlled system. With this in hand, the large deviation principle is achieved in Section 4 and in Section 5, the exit problem is shown. We have also included an Appendix stating the results used frequently in the paper.

\section{Preliminaries and Main Results}\label{sec2}
Let $H$ denote the space $L^{2}(\mathcal{D})$ and $V$ be the space $H^{1}(\mathcal{D})$ with norms,
\begin{eqnarray*}
&&\|u\|^2:=\int_{\mathcal{D}}u\cdot \bar{u}dx,~{\rm for}~ u\in H,\\
&&\|u\|^2_V:=\int_{\mathcal{D}}u\cdot \bar{u}+\partial_{x}u\cdot \partial_{x}\bar{u}dx,~{\rm for}~ u\in V,
\end{eqnarray*}
respectively, where $\mathcal{D}=(0,1)$ is the domain.  Using notation $X^{*}$ as the dual space of $X$, we apply operator $A: V\rightarrow V^{*}$ defined as $\langle Au,v\rangle= \int_{\mathcal{D}} \frac{d}{dx}u(x)\frac{d}{dx}\bar{v}(x)dx$ for $u,v\in V$, to write $Au=-\Delta u$. Furthermore, for $u\in V$, letting $f(u):= |u|^{2\sigma}u$, equation \eqref{Schrodinger} may be given in the following abstract form,
\begin{eqnarray}\label{2.1}
du(t)+iAu(t)dt=\lambda f(u(t))dt + g(t, u(t))dW,
\end{eqnarray}
where $W$ is an $H$-valued Wiener process defined on a probability space $(\Omega, \mathcal{F}, \{\mathcal{F}_t\}_{t\geq 0}, \mathbb{P})$, with the covariance operator $Q$, adapted to the complete, right continuous filtration $\{\mathcal{F}_t\}_{t\geq 0}$. If $\{e_{k}\}_{k\geq 1}$ is a complete orthonormal basis of $H$ such that $Qe_{i}=\lambda_{i}e_{i}$, then $W$ can be written formally as the expansion $W(t,\omega)=\sum\limits_{k\geq 1}\sqrt{\lambda_{k}}e_{k}W_{k}(t,\omega)$ where $\{W_{k}\}_{k\geq 1}$ is a sequence of independent standard one-dimensional Brownian motions. We also have that $W\in \mathcal{C}([0,\infty),H)$ almost surely, see \cite{Prato}.

Let $H_{0}=Q^{\frac{1}{2}}H$, then $H_{0}$ is a Hilbert space with inner product,
\begin{equation*}
( h,g)_{H_{0}}=(Q^{-\frac{1}{2}}h,Q^{-\frac{1}{2}}g)_{H},~ \forall~~ h,g\in H_{0},
\end{equation*}
 and the induced norm, $\|\cdot\|_{H_{0}}^{2}=(\cdot,\cdot)_{H_{0}}$. The imbedding map $i:H_{0}\rightarrow H$ is Hilbert-Schmidt and hence compact operator with $ii^{\ast}=Q$. Now consider another separable Hilbert space $X$ and let $L_{Q}(H_{0},X)$ be the space of linear operators $S:H_{0}\rightarrow X$ such that $SQ^{\frac{1}{2}}$ is a linear Hilbert-Schmidt
operator from $H$ to $X$,  endowed with the norm $\|S\|_{L_{Q}}^{2}=tr(SQS^{\ast}) =\sum\limits_{k}| SQ^{\frac{1}{2}}e_{k}|_{X}^{2}$. Set $L_2(H_0,X)=\{SQ^{\frac{1}{2}}: \, S\in L_{Q}(H_{0},X)\}$ with the norm defined by $\|f\|^2_{L_2(H_0,X)}  =\sum\limits_{k}|fQ^\frac{1}{2}e_{k}|_{X}^{2}$.

For an $X$-valued predictable process $G\in L^{2}(\Omega;L^{2}_{loc}([0,\infty),L_{2}(H_0,X)))$, by taking $G_{k}=GQ^\frac{1}{2}e_{k}$, one can define the stochastic integral,
\begin{equation*}
M_{t}:=\int_{0}^{t}GdW=\sum_{k}\int_{0}^{t}
GQ^\frac{1}{2}e_{k}dW_{k}=\sum_{k}\int_{0}^{t}G_{k}dW_{k},
\end{equation*}
as an element in $\mathcal{M}_{X}^{2}$ which is the space of all $X$-valued square integrable martingales. For more details see \cite{Prato}. For the process $\{M_{t}\}_{t\geq 0}$, the Burkholder-Davis-Gundy inequality holds, which in the present context takes the form,
\begin{eqnarray}\label{2.2}
\mathbb{E}\left[\sup_{t\in [0,T]}\left\|\int_{0}^{t}GdW\right\|_{X}^{p}\right]\leq c_{p}\mathbb{E}\left(\int_{0}^{T}\|G\|_{L_{2}(H_0,X)}^{2}ds\right)^{\frac{p}{2}},
\end{eqnarray}
for any $p\geq1$.

Based on Lemmas 7.1 and 7.2 in \cite{Lisei3}, function $f:V\rightarrow H$ has the following properties for every $u,v\in V$, which will be used throughout the rest of the paper,
\begin{eqnarray}
&&\text{Re}(f(u),u) \geq 0, \hspace{.3cm} \text{and} \hspace{.3cm} \text{Re} (f(u)-f(v), u-v) \geq 0,\label{2.3}\\
&&\|f(v)\|^{2} \leq 2^{2\sigma +1} \|v\|_{V}^{4\sigma +2},\label{2.4}\\
&&\|f(u)-f(v)\|^2\leq 2^{2\sigma +1} (4\sigma -1)^{2}(\|u\|_V^{4\sigma}+\|v\|_V^{4\sigma})\|u-v\|^2,\label{2.5}
\label{fsigma}\\
&&{\rm Re} (f(v),Av)\geq 0 \label{extra}.
\end{eqnarray}
Moreover, we assume that the operator $g$ satisfies the Lipschitz continuous and linear growth conditions. Namely,
\begin{eqnarray}
&&\|g(t,u)\|_{L_{2}(H_0,H)}^{2}\leq K_{1}(1+ \|u\|^{2}), ~{\rm for ~ all}~ u\in H,\label{2.6}\\
&&\|g(t,u)\|_{L_{2}(H_0,V)}^{2}\leq K_{1}(1+ \|u\|_V^{2}), ~{\rm for ~ all}~ u\in V,\label{2.7}\\
&&\|g(t,u)-g(t,v)\|_{L_{2}(H_0,H)}^{2}\leq K_{2}\|u-v\|^{2},  ~{\rm for ~ all}~ u, v \in H. \label{2.8}
\end{eqnarray}

For large deviations, we consider variational solutions which are equivalent to weak solutions in the PDE sense but strong solutions in the probability sense, to the following stochastic controlled equation,
\begin{equation}\label{original}
u^{\epsilon}(t)= u_0-i\int_{0}^{t}Au^{\epsilon}(s)ds - \lambda \int_{0}^{t} f(u^{\epsilon}(s))ds+ \sqrt{\epsilon}\int_{0}^{t} g(s, u^{\epsilon}(s))dW+\int_{0}^{t} g(s, u^{\epsilon}(s))h(s)ds,
\end{equation}
and to the skeleton equation given by,
\begin{eqnarray}\label{controlled}
u_{h}(t)= u_{0}-i\int_{0}^{t} Au_{h}(s)ds -\lambda \int_{0}^{t}f(u_{h}(s))ds +\int_{0}^{t}g(s, u_{h}(s))h(s)ds,
\end{eqnarray}
where $h\in \mathcal{S}_{M}(H_0):= \left\{u\in L^{2}(0,T;H_0): \int_{0}^{T} \|u(s)\|_{H_0}^{2}ds \leq M\right\}$, which is a Polish space endowed with the weak topology $d(h,g)=\sum_{k\geq 1}\frac{1}{2^k}\left|\int_0^t(h(s)-g(s), \xi_k) ds\right|$, with the sequence $\{\xi_k\}_{k\geq 1}$ being an orthonormal basis of $L^{2}(0,T;H_0)$.

First, we have the following well-posedness result to equation (\ref{original}). Similarly we may prove the well-posedness of the variational solution to \eqref{controlled}, denoted as $\mathcal{G}(\int_{0}^{\cdot} h(s)ds)$.
\begin{theorem}\label{theorem1}
Suppose that the initial data $u_0$ is $\mathcal{F}_0$ measurable random variable, and is in $L^{p}(\Omega; V)$ for any $p\geq 1$ and operator $g$ satisfies conditions (\ref{2.6})-(\ref{2.8}).  For any $\epsilon\in [0,1]$, there exists a unique solution of Schr\"odinger equation \eqref{original} in space $\mathcal{C}([0,T];H)\cap L^{p}(0,T;V)$, $\mathbb{P}$ a.s. in the following sense,
\begin{eqnarray*}
 (u^{\epsilon}(t),\phi)=  (u_0,\phi)-i\int_{0}^{t}(\nabla u^{\epsilon}(s),\nabla \phi) ds - \lambda \int_{0}^{t}  (f(u^{\epsilon}(s)), \phi) ds\nonumber \\+ \sqrt{\epsilon}\int_{0}^{t}( g(s, u^{\epsilon}(s)), \phi) dW+\int_{0}^{t}  (g(s, u^{\epsilon}(s))h(s),\phi) ds,
\end{eqnarray*}
for any $\phi\in V$ and $t\in [0,T]$. Moreover, the following energy estimates hold for all $p\geq 1$,
\begin{eqnarray}
&&\sup_{\epsilon\in [0,1]}\mathbb{E}\left[ \sup_{t\in [0, T]} \|u^{\epsilon}(t)\|^{p} \right]\leq C\left( p, T, K_{1}, \mathbb{E}\|u_0\|^{p}\right), \label{2.12}\\
&&\sup_{\epsilon\in [0,1]}\mathbb{E} \int_{0}^{T} \|u^{\epsilon}(t)\|_{V}^{p}dt \leq C\left( p, T, K_{1}, \mathbb{E}\|u_0\|_{V}^{p}\right).\label{2.13}
\end{eqnarray}

\end{theorem}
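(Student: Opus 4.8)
The plan is to prove existence by a Galerkin approximation combined with energy estimates, then obtain uniqueness and pathwise regularity by a direct $L^2$-estimate on the difference of two solutions. Let $\{e_k\}_{k\ge 1}$ be the orthonormal basis of $H$ consisting of eigenfunctions of $A$ (which also spans $V$), let $H_n=\mathrm{span}\{e_1,\dots,e_n\}$, and let $P_n$ be the orthogonal projection onto $H_n$. The finite-dimensional system for $u_n^\epsilon\in H_n$,
\begin{equation*}
du_n^\epsilon + iP_nAu_n^\epsilon\,dt = \lambda P_n f(u_n^\epsilon)\,dt + \sqrt{\epsilon}\,P_n g(t,u_n^\epsilon)\,dW + P_n\big(g(t,u_n^\epsilon)h(t)\big)\,dt,
\end{equation*}
has coefficients that are locally Lipschitz (using \eqref{2.5} for $f$ and \eqref{2.8} for $g$), so it admits a unique local solution; global existence will follow from the a priori bounds below. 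The key structural facts are the cancellations $\mathrm{Re}(iAu,u)=0$, $\mathrm{Re}(iAu,Au)=0$, $\mathrm{Re}(f(u),u)\ge 0$ and $\mathrm{Re}(f(u),Au)\ge 0$ from \eqref{2.3} and \eqref{extra}; these are exactly what makes the It\^o formula usable despite the dispersive term not being coercive.

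The first main step is the $L^2$ a priori estimate \eqref{2.12}. Apply It\^o's formula to $\|u_n^\epsilon(t)\|^2$: the terms $\mathrm{Re}(iAu_n^\epsilon,u_n^\epsilon)$ and $\mathrm{Re}(\lambda f(u_n^\epsilon),u_n^\epsilon)$ have favorable signs, the control term is bounded by $\|g(s,u_n^\epsilon)\|_{L_2(H_0,H)}\|h(s)\|_{H_0}\|u_n^\epsilon\|\le K_1^{1/2}(1+\|u_n^\epsilon\|)\|h(s)\|_{H_0}\|u_n^\epsilon\|$ via \eqref{2.6}, and the It\^o correction contributes $\epsilon\|g(s,u_n^\epsilon)\|_{L_2(H_0,H)}^2\le K_1(1+\|u_n^\epsilon\|^2)$. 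Since $\int_0^T\|h(s)\|_{H_0}^2\,ds\le M$, Gr\"onwall (in the integrating-factor form, absorbing $\|h\|_{H_0}$ into a time-dependent but integrable coefficient) gives a bound on $\mathbb{E}\|u_n^\epsilon(t)\|^p$ for any $p\ge1$; taking the supremum in $t$ inside and handling the stochastic integral by \eqref{2.2} together with \eqref{2.6} yields \eqref{2.12}, uniformly in $\epsilon\in[0,1]$ and $n$. The second step is \eqref{2.13}: apply It\^o's formula to $\|u_n^\epsilon(t)\|_V^2$ (equivalently to $\|u_n^\epsilon\|^2+\|Au_n^\epsilon^{1/2}\cdots\|$; concretely control $\|u_n^\epsilon\|^2+\langle Au_n^\epsilon,u_n^\epsilon\rangle$). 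Again $\mathrm{Re}(iAu_n^\epsilon,Au_n^\epsilon)=0$ and $\mathrm{Re}(f(u_n^\epsilon),Au_n^\epsilon)\ge0$ by \eqref{extra} kill the worst terms; the control term is estimated by \eqref{2.7} as $\|g(s,u_n^\epsilon)h(s)\|_V\|u_n^\epsilon\|_V\le K_1^{1/2}(1+\|u_n^\epsilon\|_V)\|h(s)\|_{H_0}\|u_n^\epsilon\|_V$, the It\^o term by \eqref{2.7}, and BDG \eqref{2.2} handles the martingale part. This gives a uniform bound on $\mathbb{E}\sup_{t}\|u_n^\epsilon(t)\|_V^2$, hence in particular \eqref{2.13} and its higher-moment version after iterating the argument with $\|u_n^\epsilon\|_V^p$.

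With these uniform bounds, one passes to the limit $n\to\infty$: $u_n^\epsilon$ is bounded in $L^p(\Omega;L^\infty(0,T;V))$ and, from the equation, $\partial_t u_n^\epsilon$ is bounded in a negative-order space, so by the Aubin--Lions--Simon compactness lemma together with a tightness/martingale-representation argument (or, since $V\hookrightarrow H$ is compact in one dimension, a direct weak-* plus compactness extraction) one obtains a limit $u^\epsilon\in L^p(\Omega;L^\infty(0,T;V))\cap L^2(\Omega;C([0,T];H))$ solving \eqref{original} in the stated weak-in-space/strong-in-probability sense; the monotonicity inequality $\mathrm{Re}(f(u)-f(v),u-v)\ge0$ from \eqref{2.3} is used in the standard Minty-type argument to identify the nonlinear limit $\lambda f(u^\epsilon)$. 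Uniqueness is the cleanest part: given two solutions $u,v$ with the same data, It\^o's formula for $\|u(t)-v(t)\|^2$ gives, using $\mathrm{Re}(iA(u-v),u-v)=0$, the monotonicity $\mathrm{Re}(f(u)-f(v),u-v)\ge0$ from \eqref{2.3} (so that term has the right sign and is dropped), the Lipschitz bound \eqref{2.8} for the difference of the $g$-terms, and $\int_0^T\|h\|_{H_0}^2<\infty$, an inequality of the form $d\|u-v\|^2\le C(\|h(s)\|_{H_0}^2+1)\|u-v\|^2\,ds + dM_s$ with $M$ a martingale; Gr\"onwall then forces $u\equiv v$. I expect the main obstacle to be the $H^1$ (i.e.\ $V$) estimate \eqref{2.13}: here one must be careful that applying It\^o's formula to $\|u_n^\epsilon\|_V^2$ is legitimate at the Galerkin level (it is, since everything is finite-dimensional) and, more subtly, that the control term $g(s,u)h(s)$ genuinely lands in $V$ with the quantitative bound \eqref{2.7} — this is why the hypothesis $u_0\in L^p(\Omega;V)$ and assumption \eqref{2.7} are imposed, as flagged in the introduction, and it is the one place where the lack of smoothing for the Schr\"odinger semigroup prevents a softer argument.
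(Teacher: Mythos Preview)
Your overall architecture---Galerkin approximation, It\^o's formula on $\|u_n^\epsilon\|^p$ and on $\|u_n^\epsilon\|_V^p$ exploiting the sign properties \eqref{2.3} and \eqref{extra}, BDG plus Gr\"onwall for the uniform bounds, weak extraction, and It\^o on the difference for uniqueness---matches the paper's. Two points of divergence are worth flagging.

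First, a minor one: the paper inserts a cutoff $\Psi^R(\|u_n\|)$ in front of $f$ so that the Galerkin SDE is globally Lipschitz from the outset; the a priori bounds are then shown uniform in $R$ and the cutoff is removed. Your ``locally Lipschitz plus a priori bounds $\Rightarrow$ global'' route is equivalent and equally standard.

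Second, and more substantively, the paper does \emph{not} identify the nonlinear limit by a Minty argument. Once the weak limit $u^\epsilon$ is in hand, it sets $\hat u_n^\epsilon := \pi_n u^\epsilon$, applies It\^o to $\|u_n^\epsilon - \hat u_n^\epsilon\|^2$, and drives this to zero in $L^1(\Omega\times[0,T])$; the engine is the \emph{local Lipschitz} bound \eqref{2.5} for $f$ together with the trivial strong convergence $\hat u_n^\epsilon=\pi_n u^\epsilon\to u^\epsilon$ in $L^2(\Omega\times[0,T];H)$. The resulting strong convergence of $u_n^\epsilon$ then identifies $\tilde f=f(u^\epsilon)$ and $\tilde g=g(\cdot,u^\epsilon)$ in one stroke. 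Your Minty route handles $\tilde f$ via the monotonicity in \eqref{2.3}, but identifying $\tilde g$ from the Lipschitz condition \eqref{2.8} still needs some strong convergence of $u_n^\epsilon$; you gesture at Aubin--Lions/tightness for this, whereas the paper avoids any stochastic compactness machinery (no Skorokhod, no tightness) at the existence stage---the strong convergence comes purely from the $\hat u_n^\epsilon$ comparison, following Breckner and Keller--Lisei. Your approach is closer to the Krylov--Rozovskii variational framework and would also go through, provided you make explicit how the diffusion limit $\tilde g$ is pinned down; the paper's $\hat u_n$ trick buys a shorter and entirely self-contained identification step at the cost of relying on \eqref{2.5} rather than only on monotonicity.
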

Using the above notation, we have the following large deviations result by applying theorem 6 of \cite{BDM}.

\begin{theorem}
The family $\{u^{\epsilon}(\cdot)\}_{\epsilon\in [0,1]}$ satisfies the large deviation principle in $\mathcal{C}([0,T];H)$ with rate function,
\begin{equation*}
I(v)=
\frac{1}{2}\inf_{\{h\in L^2(0,T; H_0): v=\mathcal{G}(\int_{0}^{\cdot} h(s)ds)\}}\int_{0}^{T} \|h(s)\|_{H_0}^{2}ds,
\end{equation*}
where the infimum of empty set is taken to be infinity.
\end{theorem}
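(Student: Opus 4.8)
The plan is to apply the weak convergence criterion of \cite{BDM,Budhiraja}. Let $\mathcal{G}^0$ denote the solution map $h\mapsto u_h$ of the skeleton equation \eqref{controlled}, well posed by the analogue of Theorem~\ref{theorem1}, and let $u^{\epsilon,h}$ denote the solution of the controlled equation \eqref{original} with control $h$, so that $u^{\epsilon}=u^{\epsilon,0}$. By Theorem~6 of \cite{BDM}, the asserted large deviation principle with the stated rate function follows once two statements are established. The first is a \emph{compactness} statement: for each $M<\infty$ the level set $K_M=\{\mathcal{G}^0(\int_0^\cdot h(s)\,ds):h\in\mathcal{S}_M(H_0)\}$ is compact in $\mathcal{C}([0,T];H)$. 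The second is a \emph{weak convergence} statement: whenever $\{h^\epsilon\}$ are $\mathcal{S}_M(H_0)$-valued predictable controls with $h^\epsilon\to h$ in distribution for the weak topology of $\mathcal{S}_M(H_0)$, one has $u^{\epsilon,h^\epsilon}\to\mathcal{G}^0(\int_0^\cdot h(s)\,ds)$ in distribution in $\mathcal{C}([0,T];H)$.

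For the first statement I would take a sequence $h_n\in\mathcal{S}_M(H_0)$ and, using that $\mathcal{S}_M(H_0)$ is weakly compact and metrizable, pass to a subsequence with $h_n\rightharpoonup h$ in $L^2(0,T;H_0)$. The deterministic counterparts of \eqref{2.12}--\eqref{2.13}, obtained by testing \eqref{controlled} against $u_{h_n}$ and against $Au_{h_n}$ and using \eqref{2.3}, \eqref{extra} to discard the nonlinear term, \eqref{2.7} for the noise coefficient and the bound $\int_0^T\|h_n\|_{H_0}^2\,ds\le M$, give a uniform bound for $u_{h_n}$ in $L^\infty(0,T;H)\cap L^2(0,T;V)$; then \eqref{controlled} bounds $\partial_t u_{h_n}$ in, say, $L^1(0,T;H)+L^2(0,T;V^*)$. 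An Aubin--Lions--Simon argument extracts a further subsequence with $u_{h_n}\to w$ strongly in $L^2(0,T;H)$, and the equation together with interpolation gives equicontinuity, hence convergence in $\mathcal{C}([0,T];H)$. I would then pass to the limit in \eqref{controlled}: the nonlinearity is handled by \eqref{2.5} and the strong convergence, while the control term is split as $\int_0^t(g(s,u_{h_n})-g(s,w))h_n\,ds+\int_0^t g(s,w)h_n\,ds$, the first term vanishing by \eqref{2.8} together with the $L^2$-bound on $h_n$, and the second converging to $\int_0^t g(s,w)h\,ds$ because $s\mapsto g(s,w(s))$ takes values in Hilbert--Schmidt operators and therefore induces a compact, hence weak-to-strong continuous, map $L^2(0,T;H_0)\to\mathcal{C}([0,T];H)$. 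Uniqueness for \eqref{controlled} identifies $w=\mathcal{G}^0(\int_0^\cdot h\,ds)$ and forces the whole sequence to converge, giving compactness of $K_M$.

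For the second statement I would run a stochastic compactness and limit-identification scheme. The bounds \eqref{2.12}--\eqref{2.13}, which hold uniformly in $\epsilon\in[0,1]$ and in $h^\epsilon\in\mathcal{S}_M(H_0)$ by the same computation, together with a fractional-in-time H\"{o}lder bound for the stochastic term via \eqref{2.2} and a Cauchy--Schwarz bound for the control term, make the laws of $\{u^{\epsilon,h^\epsilon}\}$ tight in $L^2(0,T;H)$ and in $\mathcal{C}([0,T];V_w)$, with $V_w$ the space $V$ under its weak topology. Together with the tightness of $\{(W,h^\epsilon)\}$ this yields, along a subsequence, almost surely convergent copies on an auxiliary probability space; following \cite{wang} and using Gy\"{o}ngy--Krylov's lemma together with pathwise uniqueness, one even recovers convergence in probability of $u^{\epsilon,h^\epsilon}$ in $L^2(0,T;H)$ on the original space. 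Passing to the limit, the It\^{o} term $\sqrt{\epsilon}\int_0^\cdot g(s,u^{\epsilon,h^\epsilon})\,dW$ tends to zero in $L^2(\Omega;\mathcal{C}([0,T];H))$ by \eqref{2.2} and \eqref{2.6}; the control term is treated by the same splitting as before, now using the strong $L^2(0,T;H)$ convergence of $u^{\epsilon,h^\epsilon}$ against \eqref{2.8} and the weak convergence of $h^\epsilon$ against the compact operator induced by $g(\cdot,u)$; the nonlinear term is handled by \eqref{2.5}; so the limit solves \eqref{controlled} with control $h$ and, by uniqueness, equals $\mathcal{G}^0(\int_0^\cdot h\,ds)$. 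A final bootstrap from $L^2(0,T;H)$ to $\mathcal{C}([0,T];H)$, using the equation and the equicontinuity estimates, completes the verification.

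The hard part will be the lack of any regularization from the dispersive term $i\Delta u$: the It\^{o} formula yields only an $L^2$-type a priori bound, with the $H^1$ bound available purely because $u_0\in V$ rather than through dissipation, so standard parabolic compactness is unavailable. Consequently the strong $L^2(0,T;H)$ convergence of the approximations --- needed both to pass to the limit in the nonlinearity $f(u)=|u|^{2\sigma}u$ and to absorb the merely weak convergence of the controls inside $g(s,u)h$ --- has to be produced by the more delicate stochastic compactness argument in the spirit of \cite{wang}, combined with Gy\"{o}ngy--Krylov and pathwise uniqueness to descend to the original probability space; and upgrading that convergence to $\mathcal{C}([0,T];H)$, the space in which the principle is stated, is the last delicate point.
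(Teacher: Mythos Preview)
Your proposal is correct and follows essentially the same route as the paper: both verify the two conditions of Theorem~6 in \cite{BDM}, obtain the preliminary $L^2(0,T;H)$ convergence of the controlled solutions by a tightness/Aubin--Lions argument combined with Gy\"{o}ngy--Krylov and pathwise uniqueness (the paper's Lemma~\ref{lem4.1}), identify the limit as the skeleton solution by the same splitting of the control term that you describe, and then upgrade to $\mathcal{C}([0,T];H)$. The only noticeable implementation difference is the last step: where you invoke equicontinuity and an Ascoli-type argument, the paper instead applies the It\^{o} formula directly to $\|u^{\epsilon}_{h_\epsilon}-u_h\|^2$ (respectively takes the inner product with $V^\epsilon$ in the deterministic case) and bounds $\sup_{s\le t}\|V^\epsilon(s)\|^2$ by a constant times $\bigl(\int_0^t\|V^\epsilon(s)\|^2\,ds\bigr)^{1/2}$, which immediately reduces the $\mathcal{C}([0,T];H)$ convergence to the already established $L^2(0,T;H)$ convergence.
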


As a direct consequence of large deviations, we obtain the result below concerning the exit problem with,
\begin{eqnarray}\label{tau}
\tau^{\epsilon}:= \inf \left\{t: u^\epsilon(t)\in D_{T_{0}}^c\right\},
\end{eqnarray}
where $D_{T_{0}}$ is defined by $D_{T_{0}}=\left\{u^\epsilon: \sup_{t\in [0,T_0]}\|u^\epsilon(t)\|^{2}\leq r \right\}$ for any $r\in \mathbb{R}^+$.

\begin{theorem}
For any given time $T_{0}$ and fixed $\delta>0$, $\epsilon < \frac{1}{K_{1}T_{0}}$, the probability of exit time being before $T_{0}$ can be estimated by,
\begin{eqnarray}\label{exit}
\exp\left(-\frac{1}{\epsilon}\left(\inf_{v\in D_{T_{0}}^{c}}I(v)+\delta\right)\right) \leq \mathbb{P}(\tau^{\epsilon}\leq T_0)
\leq  \frac{2 \sqrt{\epsilon K_{1}}C(T_0, u_{0})}{r\left(1-\epsilon K_{1}T_{0}\right)- \|u_0\|^{2} -\epsilon K_{1}T_{0}}.
\end{eqnarray}
Also the mean exit time is bounded as follows,
\begin{eqnarray*}\label{mean}
\mathbb{E}(\tau^{\epsilon})\leq \frac{1}{1-\exp\left(-\frac{1}{\epsilon}\left(\inf_{v\in D_{1}}I(v)-\delta\right)\right)}.
\end{eqnarray*}
\end{theorem}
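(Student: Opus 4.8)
The three inequalities have essentially independent proofs, and I would establish them in the order displayed. Throughout, $u^{\epsilon}$ denotes the solution of the stochastic Schr\"{o}dinger equation \eqref{Schrodinger} (equivalently, of \eqref{original} with $h\equiv 0$), $\tau^{\epsilon}$ is its first exit time from the closed ball $\{u:\|u\|^{2}\le r\}$, and $\{\tau^{\epsilon}\le T_{0}\}$ coincides with $\{u^{\epsilon}\in D_{T_{0}}^{c}\}$ as events in the path space $\mathcal{C}([0,T];H)$. For the left-hand inequality in \eqref{exit} nothing more than the preceding theorem is needed: since $v\mapsto\sup_{t\in[0,T_{0}]}\|v(t)\|$ is continuous on $\mathcal{C}([0,T];H)$, the set $D_{T_{0}}^{c}=\{v:\sup_{t\in[0,T_{0}]}\|v(t)\|^{2}>r\}$ is open there, so the large deviation lower bound gives $\liminf_{\epsilon\to 0}\epsilon\log\mathbb{P}(\tau^{\epsilon}\le T_{0})\ge-\inf_{v\in D_{T_{0}}^{c}}I(v)$, whence $\mathbb{P}(\tau^{\epsilon}\le T_{0})\ge\exp\big(-\tfrac{1}{\epsilon}(\inf_{v\in D_{T_{0}}^{c}}I(v)+\delta)\big)$ for all $\epsilon$ small.

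The right-hand inequality in \eqref{exit} is not obtained from the large deviation principle but from a direct It\^{o}--Burkholder--Davis--Gundy argument. Applying It\^{o}'s formula for the Gelfand triple $V\hookrightarrow H\hookrightarrow V^{*}$ to $t\mapsto\|u^{\epsilon}(t)\|^{2}$, the drift coming from $-iAu^{\epsilon}$ vanishes because $\langle Au^{\epsilon},u^{\epsilon}\rangle=\|\partial_{x}u^{\epsilon}\|^{2}$ is real — the cancellation already used in Theorem \ref{theorem1} — and the drift coming from $-\lambda f(u^{\epsilon})$ is nonpositive by \eqref{2.3}, so that
\[
\|u^{\epsilon}(t)\|^{2}\le\|u_{0}\|^{2}+\epsilon\int_{0}^{t}\|g(s,u^{\epsilon}(s))\|_{L_{2}(H_{0},H)}^{2}\,ds+2\sqrt{\epsilon}\,M_{t},\qquad M_{t}:=\int_{0}^{t}\text{Re}\big(g(s,u^{\epsilon}(s))dW,u^{\epsilon}(s)\big).
\]
Stopping at $\tau^{\epsilon}$, the bound $\|u^{\epsilon}(s)\|^{2}\le r$ for $s\le\tau^{\epsilon}$ together with \eqref{2.6} makes the first integrand $\le K_{1}(1+r)$; taking the supremum over $t\in[0,T_{0}]$ of the stopped process and using that path continuity forces $\|u^{\epsilon}(\tau^{\epsilon})\|^{2}=r$ on $\{\tau^{\epsilon}\le T_{0}\}$ yields
\[
\big(r-\|u_{0}\|^{2}-\epsilon K_{1}(1+r)T_{0}\big)\mathbf{1}_{\{\tau^{\epsilon}\le T_{0}\}}\le 2\sqrt{\epsilon}\,\sup_{t\in[0,T_{0}]}\big|M_{t\wedge\tau^{\epsilon}}\big|.
\]
Taking expectations, estimating the right-hand side by \eqref{2.2} with $p=1$, and using the deterministic bound $\langle M\rangle_{T_{0}\wedge\tau^{\epsilon}}\le\int_{0}^{T_{0}\wedge\tau^{\epsilon}}\|u^{\epsilon}(s)\|^{2}\|g(s,u^{\epsilon}(s))\|_{L_{2}(H_{0},H)}^{2}\,ds\le rK_{1}(1+r)T_{0}$, one arrives at the claimed upper bound with $C(T_{0},u_{0})=c_{1}\sqrt{r(1+r)T_{0}}$; the hypothesis $\epsilon<\tfrac{1}{K_{1}T_{0}}$ and the positivity of the denominator are exactly what allow the division.

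For the mean exit time I would follow Chapters 3 and 4 of \cite{Freidlin}. Writing $\mathbb{E}(\tau^{\epsilon})=\int_{0}^{\infty}\mathbb{P}(\tau^{\epsilon}>t)\,dt\le\sum_{n\ge 0}\mathbb{P}(\tau^{\epsilon}>n)$, it is enough to prove $\mathbb{P}(\tau^{\epsilon}>n)\le q_{\epsilon}^{\,n}$ for a constant $q_{\epsilon}\in(0,1)$ and then sum the geometric series to get $\mathbb{E}(\tau^{\epsilon})\le(1-q_{\epsilon})^{-1}$. The estimate $\mathbb{P}(\tau^{\epsilon}>n)\le q_{\epsilon}^{\,n}$ follows by conditioning successively on $\mathcal{F}_{1},\mathcal{F}_{2},\dots$: on $\{\tau^{\epsilon}>k\}$ the value $u^{\epsilon}(k)$ lies in the ball, and the Markov property reduces the conditional non-exit probability over $[k,k+1]$ to the supremum, over starting points in the ball, of the one-unit-time non-exit probability, which is bounded by $q_{\epsilon}=\exp\big(-\tfrac{1}{\epsilon}(\inf_{v\in D_{1}}I(v)-\delta)\big)$ by the lower bound of the first part applied on a time window of length one. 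The genuinely delicate point of the whole theorem is exactly this: the large deviation lower bound has to be made \emph{uniform} over initial data in the ball, which is where lower semicontinuity of $I$ and the uniform energy estimates \eqref{2.12}--\eqref{2.13} enter, and one must check that the relevant infimum of $I$ is strictly positive so that $q_{\epsilon}<1$; compared with this, the complex It\^{o} computation and the treatment of the stopped stochastic integral in the upper bound are routine.
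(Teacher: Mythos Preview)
Your argument for the lower bound in \eqref{exit} and for the mean exit time follows the same skeleton as the paper: LDP lower bound for the open set $D_{T_{0}}^{c}$, then a geometric-series reduction via the Markov property for $\mathbb{E}(\tau^{\epsilon})$. There is, however, a genuine slip in the last paragraph. You write that the one-step non-exit probability is bounded by $q_{\epsilon}=\exp\big(-\tfrac{1}{\epsilon}(\inf_{v\in D_{1}}I(v)-\delta)\big)$ ``by the lower bound of the first part.'' The first part only gives $\mathbb{P}(\tau^{\epsilon}\le 1)\ge\exp\big(-\tfrac{1}{\epsilon}(\inf_{v\in D_{1}^{c}}I(v)+\delta)\big)$, hence $\mathbb{P}(\tau^{\epsilon}>1)\le 1-\exp(\cdots)$, which is neither the bound you state nor the bound you need. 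The correct input is the LDP \emph{upper} bound \eqref{closed} applied to the closed set $D_{1}$: $\limsup_{\epsilon\to 0}\epsilon\log\mathbb{P}(u^{\epsilon}\in D_{1})\le-\inf_{v\in D_{1}}I(v)$, which for small $\epsilon$ gives exactly your $q_{\epsilon}$. This is what the paper invokes as well. Your remark that uniformity over starting points in the ball is the delicate issue is well taken.

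For the right-hand inequality in \eqref{exit} your route differs from the paper's and is in fact cleaner. The paper does not stop at $\tau^{\epsilon}$: it takes the unlocalised supremum of $\|u^{\epsilon}\|^{2}$ over $[0,T_{0}]$, absorbs the It\^{o} correction term $\epsilon K_{1}T_{0}(1+\sup\|u^{\epsilon}\|^{2})$ into the left-hand side (this is where $\epsilon<\tfrac{1}{K_{1}T_{0}}$ enters), and then applies Doob's inequality to the martingale $\int_{0}^{t}(g\,dW,u^{\epsilon})$, bounding $\mathbb{E}[\int_{0}^{T_{0}}\|g\|^{2}\|u^{\epsilon}\|^{2}ds]^{1/2}$ via the a priori moment estimate \eqref{2.12}; this is why the constant is written $C(T_{0},u_{0})$. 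Your stopping-time approach replaces the appeal to \eqref{2.12} by the trivial bound $\|u^{\epsilon}(s)\|^{2}\le r$ for $s\le\tau^{\epsilon}$, which makes both the It\^{o} correction and the quadratic variation deterministically bounded and yields the same denominator $r(1-\epsilon K_{1}T_{0})-\|u_{0}\|^{2}-\epsilon K_{1}T_{0}$. The gain is that your argument is self-contained and produces an explicit constant; the cost is that you need path continuity of $u^{\epsilon}$ in $H$ to assert $\|u^{\epsilon}(\tau^{\epsilon})\|^{2}=r$, which is available from Theorem \ref{theorem1}.
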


\section{well-posedness}
In this section, we consider the well-posedness of the equations (\ref{original}) and (\ref{controlled}), which will be divided into two parts. For the first part, we aim to prove the existence of solutions by implementing the Galerkin approximation scheme following the method applied in \cite{Breckner, Lisei3}, where we refer to Theorems 5.1 and 5.2 in \cite{Lisei3} for some of the details to avoid repetition. For the second part, we show the uniqueness of solutions.

Define $H_{n}= \text{span}\{e_{1}, e_{2},..., e_{n}\}$ and let $\pi_{n}$ be the projector from $H$ to  $H_{n}$ given by,
\begin{eqnarray*}
\pi_{n}u= \sum_{k=1}^{n}(u,e_{k})e_{k},~{\rm for} ~u\in H.
\end{eqnarray*}
To handle the nonlinear term, we consider for each fixed $R\in \mathbb{N}$,
\begin{eqnarray*}
 \Psi^{R}(r) = \left\{ \begin{array}{ll}
         1, & 0\leq r\leq R,\\
         R+1-r,& R<r<R+1,\\
         0, & r\geq R+1, \\
 \end{array} \right.
\end{eqnarray*}
then $\Psi^{R}(\|u_{n}^{\epsilon,R}\|)f_{n}(u^{\epsilon, R}_{n})$ is lipschitz continuous and satisfies the linear growth condition, which along with conditions (\ref{2.6}), (\ref{2.8}) imply the existence and uniqueness of solutions in $L^{2}(\Omega; \mathcal{C}([0,T];H_{n}))$ to the finite dimensional system,
\begin{eqnarray}\label{3.1}
d(u^{\epsilon, R}_{n}, \varphi)&=& - i \langle Au^{\epsilon, R}_{n}, \varphi\rangle dt - \lambda \left(\Psi^{R}(\|u^{\epsilon, R}_{n}\|)f_{n}(u^{\epsilon, R}_{n}), \varphi\right)dt\nonumber \\
&+&\sqrt{\epsilon}\left(g_{n}(t, u^{\epsilon, R}_{n}(t))dW, \varphi\right)+\left(g_{n}(t,u^{\epsilon, R}_{n}(t))h, \varphi\right)dt,
\end{eqnarray}
for $\varphi\in H_{n}$. We next achieve the required a priori estimates to solution $u^{\epsilon,R}_{n}$ of the finite dimensional system (\ref{3.1}).
\begin{lemma}  For any $n\geq 1$, $\epsilon\in [0,1]$ and $p\geq 1$, assume that the initial data $u_0$ is $\mathcal{F}_0$ measurable random variable with $u_0\in L^p(\Omega; V)$ and conditions \eqref{2.6}, \eqref{2.7} hold, then there exists constant $C=C(M,T,p,K_1, \mathbb{E}\|u_0\|_V^p)$ independent of $n, \epsilon$ and $R$ such that
\begin{eqnarray}\label{3.2}
\sup_{\epsilon\in [0,1]}\left(\mathbb{E} \left[\sup_{t\in [0,T]} \|u_{n}^{\epsilon,R}(t)\|^p\right]+\mathbb{E} \int_{0}^{T}  \|u_{n}^{\epsilon,R}(t)\|_V^{p}dt\right) \leq C.
\end{eqnarray}
\end{lemma}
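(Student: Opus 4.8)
The plan is to apply the classical (finite-dimensional) It\^o formula to $\|u_n^{\epsilon,R}(t)\|_V^2$ along the $H_n\cong\mathbb{R}^{2n}$-valued strong solution of \eqref{3.1}, and then to close the resulting energy inequality by the Burkholder--Davis--Gundy inequality \eqref{2.2}, Young's inequality and Gronwall's lemma. Three structural facts make this run. First, the dispersive term drops out of the It\^o differential of $\|u_n^{\epsilon,R}\|_V^2$ because $iA$ is skew-symmetric (on $H_n$, with respect to both $(\cdot,\cdot)_H$ and $(\cdot,\cdot)_V$), so it produces no drift. Second, the truncated nonlinearity has a favourable sign and is simply discarded: $\Psi^R\ge0$, while $\text{Re}(f_n(u_n^{\epsilon,R}),u_n^{\epsilon,R})\ge0$ by \eqref{2.3} and $\text{Re}(f_n(u_n^{\epsilon,R}),Au_n^{\epsilon,R})\ge0$ by \eqref{extra}; this is exactly why the final bound is independent of $R$, and why the $H^1$-regularity of $u_0$ is what makes any control of $\|u_n^{\epsilon,R}\|_V$ possible --- there is no parabolic smoothing to supply it for free. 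Third, the It\^o correction and the diffusion and control coefficients are controlled by \eqref{2.6}--\eqref{2.7}, using the $n$-uniform boundedness of $\pi_n$ on $V$ (which is why, as in Theorems 5.1--5.2 of \cite{Lisei3}, the Galerkin basis is taken compatibly with $A$).

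Discarding the nonlinear term, using \eqref{2.7} and Young's inequality on the cross term $2\,\|u_n^{\epsilon,R}\|_V\,\|g_n\|_{L_2(H_0,V)}\,\|h\|_{H_0}$, and keeping $\epsilon\le1$, one arrives at
\begin{equation*}
\|u_n^{\epsilon,R}(t)\|_V^2\le \|u_0\|_V^2 + C(K_1)\int_0^t\big(1+\|h(s)\|_{H_0}\big)\big(1+\|u_n^{\epsilon,R}(s)\|_V^2\big)\,ds + 2\sqrt\epsilon\,\Big|\int_0^t\big(u_n^{\epsilon,R}(s),\,g_n(s,u_n^{\epsilon,R})\,dW\big)_V\Big|.
\end{equation*}
Since $h\in\mathcal{S}_M(H_0)$ gives $\int_0^T(1+\|h(s)\|_{H_0})\,ds\le T+\sqrt{TM}=:L$ \emph{pathwise} by Cauchy--Schwarz, a pathwise Gronwall argument --- treating the stochastic integral as a fixed parameter --- yields, with $X_n(t):=\sup_{s\in[0,t]}\|u_n^{\epsilon,R}(s)\|_V^2$,
\begin{equation*}
X_n(t)\le e^{CL}\Big(\|u_0\|_V^2+CL+2\sqrt\epsilon\sup_{s\in[0,t]}\Big|\int_0^s\big(u_n^{\epsilon,R},g_n\,dW\big)_V\Big|\Big).
\end{equation*}

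Raising to the power $p/2$ (for $p\ge2$; the range $1\le p<2$ then follows by Jensen once $p=2$ is known), taking expectations, and estimating the stochastic term by \eqref{2.2} gives
\begin{equation*}
\mathbb{E}\Big[\sup_{s\le t}\Big|\int_0^s(u_n^{\epsilon,R},g_n\,dW)_V\Big|^{p/2}\Big]\le c_p\,\mathbb{E}\Big(\int_0^t\|u_n^{\epsilon,R}\|_V^2\|g_n\|_{L_2(H_0,V)}^2\,ds\Big)^{p/4}\le c_pK_1^{p/4}\,\mathbb{E}\Big[X_n(t)^{p/4}\Big(\int_0^t(1+\|u_n^{\epsilon,R}\|_V^2)\,ds\Big)^{p/4}\Big],
\end{equation*}
and Young's inequality then splits the right-hand side into $\eta\,\mathbb{E}X_n(t)^{p/2}$, which is absorbed into the left-hand side (licit because the truncated nonlinearity is bounded on $H_n$, so \eqref{3.1} has a strong solution with finite moments --- or, to be airtight, after a preliminary truncation at $\tau_N:=\inf\{t:\|u_n^{\epsilon,R}(t)\|_V\ge N\}$ followed by $N\to\infty$), plus a remainder $\le C\int_0^t(1+\mathbb{E}X_n(s)^{p/2})\,ds$ obtained by H\"older in time. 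A final deterministic Gronwall then produces $\mathbb{E}\sup_{t\in[0,T]}\|u_n^{\epsilon,R}(t)\|_V^p\le C(p,T,M,K_1,\mathbb{E}\|u_0\|_V^p)$ with the constant independent of $n$, $\epsilon\in[0,1]$ and $R$; since $\|\cdot\|\le\|\cdot\|_V$ and $\mathbb{E}\int_0^T\|u_n^{\epsilon,R}(t)\|_V^p\,dt\le T\,\mathbb{E}\sup_t\|u_n^{\epsilon,R}(t)\|_V^p$, both terms of \eqref{3.2} follow.

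I expect the main obstacle to be not any individual estimate but the requirement that every constant be uniform in the three parameters $n,\epsilon,R$ simultaneously --- uniformity in $R$ coming from the good sign of the nonlinearity, in $\epsilon$ from $\epsilon\le1$, and in $n$ from the contractivity (or at least $n$-uniform boundedness) of $\pi_n$ on both $H$ and $V$, which is the reason the Galerkin basis must be compatible with $A$. The secondary point needing care is the control term $g(s,u)h(s)$: it is only integrable, not square-integrable with a small prefactor, in time, so it cannot be absorbed; the membership $h\in\mathcal{S}_M(H_0)$ is used precisely to bound its time-integral pathwise, which is what makes the Gronwall factor $e^{CL}$ a universal constant.
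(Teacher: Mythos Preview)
Your proof is correct, but the route differs from the paper's in two respects worth noting. First, the paper applies the It\^o formula directly to $\|u_n^{\epsilon,R}\|^p$ and to $\|\partial_x u_n^{\epsilon,R}\|^p$ separately (so the $p(p-2)/2$ correction term appears explicitly), takes expectation immediately with a stopping-time localisation $\tau_N$, and runs Gronwall on the expectations; you instead apply It\^o only to $\|u_n^{\epsilon,R}\|_V^2$, perform a \emph{pathwise} Gronwall first (using the deterministic bound $\int_0^T(1+\|h\|_{H_0})\,ds\le T+\sqrt{TM}$ that $h\in\mathcal{S}_M$ affords), and only then raise to the $p/2$ power and close with BDG and Young. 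Second, the paper treats the $H$- and $V$-estimates as two independent computations, whereas you obtain both from the single $V$-estimate via $\|\cdot\|\le\|\cdot\|_V$. Your approach is slightly more economical; the paper's has the advantage of reusing verbatim the computations of \cite{Lisei3} for every term except the new control term $g_n h$, which is the only term it actually estimates. Both arguments hinge on the same three structural facts you identify (skew-symmetry of $iA$, the sign properties \eqref{2.3} and \eqref{extra} yielding $R$-independence, and the growth conditions \eqref{2.6}--\eqref{2.7}), so the difference is purely one of packaging.
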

\begin{proof}Applying the It\^{o} formula to $\|u_{n}^{\epsilon, R}\|^{p}$ gives,
\begin{eqnarray*}
d\|u_{n}^{\epsilon,R}(t)\|^{p} &=& p \text{Im} \langle Au_{n}^{\epsilon,R}(t), u_{n}^{\epsilon,R}(t)\rangle\|u_{n}^{\epsilon,R}(t)\|^{p-2}dt\\
 &&-\lambda p  \|u_{n}^{\epsilon,R}(t)\|^{p-2}\text{Re}  \left(\Psi^{R}(\|u_{n}^{\epsilon,R}(t)\|)f(u_{n}^{\epsilon,R}(t)), u_{n}^{\epsilon,R}(t)\right)dt\\
&&+ p \sqrt{\epsilon} \text{Re} \left(g_{n}(t,u_{n}^{\epsilon,R}(t)), u_{n}^{\epsilon,R}(t)\right)\|u_{n}^{\epsilon,R}(t)\|^{p-2}dW\\
&& + p\text{Re}\left(g_{n}(t, u_{n}^{\epsilon,R}(t))h(t), u_{n}^{\epsilon,R}(t)\right)\|u_{n}^{\epsilon,R}(t)\|^{p-2}dt\\
&&+ \epsilon p \|g_{n}(t,u_{n}^{\epsilon,R}(t))\|_{L_{2}(H_{0},H)}^{2}\|u_{n}^{\epsilon,R}(t)\|^{p-2}dt\\
&& +\epsilon \frac{p(p-2)}{2} \left|{\rm Re}\left(g_{n}(t,u_{n}^{\epsilon,R}(t)), u_{n}^{\epsilon,R}(t)\right)\right|^{2} \|u_{n}^{\epsilon,R}(t)\|^{p-4}dt.
\end{eqnarray*}
Here, we focus on the fourth term on the right hand side since estimates for bounds by constants for the rest of the terms were shown in the proof of Theorem 4.1 of \cite{Lisei3}. Let
\begin{eqnarray}
\tau_{N}:= \inf\left\{t:\|u_{n}^{\epsilon,R}(t)\|^{p} > N\right\},
\end{eqnarray}
if the set is empty, take $\tau_{N}=T$. Note that $\tau_{N}$ is increasing with $\lim_{N\rightarrow\infty}\tau_{N}=T$, $\mathbb{P}$ a.s.
Taking the integral on $[0,t\wedge \tau_{N}]$, then expectation, using the Cauchy-Schwarz inequality along with condition \eqref{2.6}, the fourth term may be bounded as follows,
\begin{eqnarray*}\label{estimate1}
&&\mathbb{E}\left|p \int_{0}^{t\wedge \tau_{N}}\text{Re}  \left(g_{n}(s,u_{n}^{\epsilon,R}(s))h(s), u_{n}^{\epsilon,R}(s)\right)\|u_{n}^{\epsilon,R}(s)\|^{p-2}ds\right|\nonumber\\
&\leq& p\mathbb{E}\int_{0}^{t\wedge \tau_{N}}\|u_{n}^{\epsilon,R}(s)\|^{p-1} \|g_{n}(s,u_{n}^{\epsilon,R}(s))\|_{L_{2}(H_0,H)}\|h(s)\|_{H_0}ds\nonumber\\
&\leq& \frac{1}{2} \mathbb{E}\left[\sup_{s\in [0,t\wedge \tau_{N}]}\|u_{n}^{\epsilon,R}(s)\|^{p}\right] +K_1 p^2\mathbb{E} \int_{0}^{t\wedge \tau_{N}} \|h(s)\|_{H_0}^{2} \|u_{n}^{\epsilon,R}(s)\|^{p-2}(1+\|u_{n}^{\epsilon,R}(s)\|^2) ds\nonumber\\
&\leq&\frac{1}{2} \mathbb{E}\left[\sup_{s\in [0,t\wedge \tau_{N}]}\|u_{n}^{\epsilon,R}(s)\|^{p}\right] +C(K_1, p)\mathbb{E}\int_{0}^{t\wedge \tau_{N}} (1+ \|u_{n}^{\epsilon,R}(s)\|^{p}) \|h(s)\|_{H_0}^{2} ds,
\end{eqnarray*}
which by noting that $h\in \mathcal{S}_{M}$ and applying the Gronwall inequality, then letting $N$ tend to infinity lead to,
\begin{eqnarray}\label{3.3}
\sup_{\epsilon\in [0,1]}\mathbb{E}\left[\sup_{t\in[0, T]} \|u_{n}^{\epsilon,R}(t)\|^{p}\right] \leq C(M,T,p,K_1, \mathbb{E}\|u_0\|^p).
\end{eqnarray}
where $C$ is independent of $n, \epsilon, R$. Next, we apply the It\^{o} formula to $\|\partial_x u_{n}^{\epsilon,R}(t)\|^{p}$ to obtain,
\begin{eqnarray*}
d\|\partial_x u_{n}^{\epsilon,R}(t)\|^{p} &=& p \text{Im} \left(Au_{n}^{\epsilon,R}(t), Au_{n}^{\epsilon,R}(t)\right) \left\|\partial_x u_{n}^{\epsilon,R}(t)\right\|^{p-2}dt\\
&& -\lambda \text{Re}\left(\Psi(\|u^{\epsilon,R}_{n}(t)\|)f_{n}(u_{n}^{\epsilon,R}(t)), Au^{\epsilon,R}_{n}(t)\right) \left\|\partial_x u_{n}^{\epsilon,R}(t)\right\|^{p-2}dt \\
&& + p\sqrt{\epsilon}\text{Re} \left(g_{n}(t,u_{n}^{\epsilon,R}(t)), Au_{n}^{\epsilon,R}(t)\right)\left\|\partial_x u_{n}^{\epsilon,R}(t)\right\|^{p-2}dW\\
&&+p  \text{Re} \left(g_{n}(t,u_{n}^{\epsilon,R}(t))h(t), Au_{n}^{\epsilon,R}(t)\right) \left\|\partial_x u_{n}^{\epsilon,R}(t)\right\|^{p-2}dt\\
&&+ p\epsilon \|g_{n}(t,u_{n}^{\epsilon,R}(t))\|^{2}_{L_{2}(H_0, V)}\left\|\partial_x u_{n}^{\epsilon,R}(t)\right\|^{2p-2}dt\\
&&+ \epsilon \frac{p(p-1)}{2} \left|{\rm Re} \left(g_{n}(t, u_{n}^{\epsilon,R}(t)), Au_{n}^{\epsilon,R}(t)\right)\right|^{2} \left\|\partial_x u_{n}^{\epsilon,R}(t)\right\|^{p-4}dt
\end{eqnarray*}
where we again only determine an estimate for the fourth term and refer the reader to the proof of Theorem 4.2 of \cite{Lisei3}. We take the integral on time interval $[0,t\wedge \tau_{N}]$, where stopping time $\tau_{N}:= \inf\left\{t: \|\partial_{x}u_{n}^{\epsilon,R}(t)\|^{p}> N\right\}$, and $\tau_N=T$ when the set is empty. Then taking the expectation on both sides and estimating it using condition (\ref{2.7}) leads to,
\begin{eqnarray*}
&&\mathbb{E}\left|p \int_{0}^{t\wedge \tau_{N}} {\rm Re}\left(g_{n}(s,u_{n}^{\epsilon,R}(s))h(s), Au_{n}^{\epsilon,R}(s)\right) \left\|\partial_x u_{n}^{\epsilon,R}(s)\right\|^{p-2}ds\right|\\
&\leq& 2p \mathbb{E} \int_{0}^{t\wedge \tau_{N}} \left\|\partial_x u_{n}^{\epsilon,R}(s)\right\|^{p-2} \left\|\partial_x u_{n}^{\epsilon,R}(s)\right\| \left\|g_{n}(s,u_{n}^{\epsilon,R}(s))\right\|_{L_{2}(H_0,V)} \|h(s)\|_{H_0}ds\nonumber\\
&\leq&\frac{1}{2} \mathbb{E}\left[\sup_{s\in [0,t\wedge \tau_{N}]}\|\partial_xu_{n}^{\epsilon,R}(s)\|^{p}\right] +C(K_1, p)\mathbb{E}\int_{0}^{t\wedge \tau_{N}} (1+ \|\partial_xu_{n}^{\epsilon,R}(s)\|^{p}) \|h(s)\|_{H_0}^{2}) ds,\nonumber
\end{eqnarray*}
 and similar to the previous case, the Gronwall inequality gives
 \begin{eqnarray}\label{3.4}
\sup_{\epsilon\in [0,1]} \mathbb{E}\left[\sup_{s\in [0, t\wedge \tau_N]} \|u_{n}^{\epsilon,R}(t)\|_V^{p}\right] \leq C(M,T,p,K_1, \mathbb{E}\|u_0\|_V^p).
 \end{eqnarray}
Then, letting $N\rightarrow\infty$ yields the desired result.
 \end{proof}

With these estimates in hand, using the same argument as in \cite{Lisei3}, we may show that there exists a unique approximate solution $u_n^\epsilon$ to the finite dimensional system,
\begin{eqnarray}\label{3.5}
d(u^{\epsilon}_{n}(t), \varphi)&=& - i \langle Au^{\epsilon}_{n}(t), \varphi\rangle dt - \lambda \left(f_{n}(u^{\epsilon}_{n}(t)), \varphi\right)dt\nonumber \\
&& +\sqrt{\epsilon}\left(g_{n}(t, u^{\epsilon}_{n}(t))dW, \varphi\right)+\left(g_{n}(t,u^{\epsilon}_{n}(t))h, \varphi\right)dt,
\end{eqnarray}
for $\varphi\in H_n$. Moreover, we have the following a priori estimates
\begin{eqnarray}
&&\sup_{\epsilon\in [0,1]}\mathbb{E}\left[\sup_{ t\in [0, T]} \|u_{n}^{\epsilon}(t)\|^{p} \right]\leq C(M,T,p, K_1, \mathbb{E}\|u_0\|^p),\label{3.6}\\
&&\sup_{\epsilon\in [0,1]}\mathbb{E} \int_{0}^{T}  \|u_{n}^{\epsilon}(t)\|_V^{p}dt\leq C(M, T,p, K_1, \mathbb{E}\|u_0\|_V^p) \label{3.7}.
\end{eqnarray}
In addition, the following bounds may be achieved using the same estimates as lemma 3.1, after taking the inner product with $ u_{n,h}(t)$ and $\partial_{x} u_{n,h}(t)$, respectively,
\begin{eqnarray}\label{3.8}
\sup_{\epsilon\in [0,1]}\sup_{t\in [0,T]} \|u_{n,h}(t)\|^p\leq C(T,p), \text{   and  } ~\sup_{\epsilon\in [0,1]}\sup_{t\in [0,T]}\|\partial_{x} u_{n,h}(t)\|^p_{V} \leq C(T,p),\label{uM}
\end{eqnarray}
where $u_{n,h}$ is the approximate solution to the finite dimensional analogue of (\ref{controlled}).

Based on these estimates, we next prove the Theorem 1. Observe that by condition \eqref{2.6} and property (\ref{2.4}) we may obtain using (\ref{3.6}), (\ref{3.7}),
\begin{eqnarray}\label{3.9}
\mathbb{E}\int_{0}^{t} \|f(u_{n}^{\epsilon}(s))\|^{2}ds \leq 2^{2\sigma +1} \mathbb{E}\int_{0}^{t} \|u_{n}^{\epsilon}(s)\|_{V}^{4\sigma +2}ds\leq C(\sigma, M, p, T, K_1),
\end{eqnarray}
and
\begin{eqnarray}\label{3.10}
\mathbb{E}\int_{0}^{t} \|g(s,u_{n}^{\epsilon}(s))\|^{2}_{L_{2}(H_0,H)} ds \leq K_{1}\mathbb{E} \int_{0}^{t} \left(1+\|u_{n}^{\epsilon}(s)\|^{2}\right)ds \leq C(M, T, K_1),
\end{eqnarray}
where $C(M, T, K_1)$ is a constant independent of $n$. Then, the bounds (\ref{3.6})-(\ref{3.7}), (\ref{3.9})-(\ref{3.10}) and the Banach-Alaoglu theorem implies that there exist processes $u^\epsilon, \tilde{f}, \tilde{g}$ such that for any fixed $\epsilon >0$, as $n$ goes to $\infty$, we have, for $p\in [1,\infty)$,
\begin{eqnarray}
&&u_{n}^{\epsilon}\rightarrow u^{\epsilon}, \hspace{.4cm} \text{weak-* in } L^{p}(\Omega; \mathcal{C}([0,T]; H)), ~\text{weakly in } L^{p}(\Omega \times [0,T];V),\\
&&f(u_{n}^{\epsilon})\rightharpoonup \tilde{f}, \hspace{.4cm} \text{in } L^{p}(\Omega \times [0,T];H),\label{3.13}\\
&&g(s, u_{n}^{\epsilon}(s)) \rightharpoonup \tilde{g}, \hspace{.4cm} \text{in } L^{2}(\Omega \times [0,T]; L_{2}(H_0, H)),
\end{eqnarray}
where $\rightharpoonup$ denotes weak convergence. Using these weak limits and letting $n$ tend to infinity in \eqref{3.5}, we obtain that $u^\epsilon, \tilde{f}, \tilde{g}$ satisfy the following equation for all $t\in [0,T]$,
\begin{eqnarray}\label{3.14}
( u^{\epsilon}(t),\phi) &=& (u_{0},\phi) - i \int_{0}^{t} ( \nabla u^{\epsilon}(s),\nabla \phi) ds -\lambda \int_{0}^{t}\left( \tilde{f}(s),\phi\right) ds +\sqrt{\epsilon} \int_{0}^{t} \left( \tilde{g}(s),\phi\right) dW \nonumber \\
&&+ \int_{0}^{t}( \tilde{g}(s)h(s), \phi) ds,
\end{eqnarray}
for all $t\in [0,T]$ and $\phi\in V$. Next, we aim to identify the nonlinear term $\tilde{f}=f(u^\epsilon)$ and $\tilde{g}=g(s,u^\epsilon(s))$ $\mathbb{P}$ a.s. in (\ref{3.14}).  Let $\hat{u}_{n}^{\epsilon}:= \pi_{n}u^{\epsilon}$, $\hat{f}_{n}:= \pi_{n}\tilde{f}$ and $\hat{g}_{n}:= \pi_{n}\tilde{g}$. Then observe that $\hat{u}_{n}^{\epsilon}(t)$ satisfies the system,
\begin{eqnarray}
 (\hat{u}_{n}^{\epsilon}(t),\varphi)&=& (u_{0},\varphi) - i \int_{0}^{t}( A\hat{u}_{n}^{\epsilon}(s),\varphi) ds -\lambda \int_{0}^{t}\left( \tilde{f}(s),\varphi\right) ds + \sqrt{\epsilon}\int_{0}^{t} ( \tilde{g}(s),\varphi) dW \nonumber \\
&&+ \int_{0}^{t}(\tilde{g}(s)h(s), \varphi) ds,
\end{eqnarray}
for all $t\in [0,T]$, $\varphi \in H_n$. Taking the difference of $\hat{u}_n^\epsilon(t)$ with the solution $u_n^{\epsilon}(t)$ of equation \eqref{3.5}, then applying the It\^{o} formula to $\|u_{n}^{\epsilon}(t)-\hat{u}_n(t)\|^2$, we obtain,
\begin{eqnarray}\label{3.16}
\|u_{n}^{\epsilon}(t)-\hat{u}_{n}^{\epsilon}(t)\|^{2} &=& -2\text{Im}\int_{0}^{t}\left\langle A(u_{n}^{\epsilon}(s)-\hat{u}_{n}^{\epsilon}(s)), u_n^{\epsilon}(s)-\hat{u}_{n}^{\epsilon}(s)\right\rangle ds\nonumber\\
&&-2\lambda \text{Re} \int_{0}^{t} \left(f(u_{n}^{\epsilon}(s))-\tilde{f}(s), u_{n}^{\epsilon}(s)-\hat{u}_{n}^{\epsilon}(s)\right)ds\nonumber\\
&&+ 2\sqrt{\epsilon}\text{Re}\int_{0}^{t} \left(g(s,u_{n}^{\epsilon}(s))-\tilde{g}(s), u_{n}^{\epsilon}(s)-\hat{u}_{n}^{\epsilon}(s)\right)dW\nonumber \\
 &&+ 2\text{Re}\int_{0}^{t}\left(g(s,u_{n}^{\epsilon}(s))h(s)- \tilde{g}(s)h(s), u_{n}^{\epsilon}(s)-\hat{u}_{n}^{\epsilon}(s)\right)ds\nonumber\\
&&+\epsilon\int_{0}^{t} \|g(s, u_{n}^{\epsilon}(s))-\hat{g}_n(s)\|^{2}_{L_{2}(H_0,H)}ds.
\end{eqnarray}
By \eqref{2.3} and Young's inequality we may decompose the nonlinear term as follows,
\begin{eqnarray*}
&&-2\lambda \text{Re} \left(f(u_{n}^{\epsilon})-\tilde{f}(s), u_{n}^{\epsilon}-\hat{u}_{n}^{\epsilon}\right)\\
&=& -2\lambda \text{Re} \left(f(u_{n}^{\epsilon})-f(\hat{u}_{n}^{\epsilon}), u_{n}^{\epsilon}-\hat{u}_{n}^{\epsilon}\right) - 2\lambda \text{Re} \left(f(\hat{u}_{n}^{\epsilon})-f(u^{\epsilon}), u_{n}^{\epsilon}-\hat{u}_{n}^{\epsilon}\right)\\
&&-2\lambda \text{Re} \left(f(u^{\epsilon})-\tilde{f}(s), u_{n}^{\epsilon}-\hat{u}_{n}^{\epsilon}\right)\\
&\leq& 2\lambda^{2} \|f(\hat{u}_n^{\epsilon})-f(u^{\epsilon})\|^{2} + \frac{1}{2}\|u_{n}^{\epsilon}-\hat{u}_{n}^{\epsilon}\|^{2} - 2\lambda \text{Re} \left(f(u^{\epsilon})-\tilde{f}(s), u_{n}^{\epsilon}-\hat{u}_{n}^{\epsilon}\right).
\end{eqnarray*}
In addition, condition (\ref{2.8}) and the Cauchy-Schwarz inequality give,
\begin{eqnarray*}
&&\|g(s, u_{n}^{\epsilon}(s))-\hat{g}_n(s)\|^{2}_{L_{2}(H_0,H)}\\&\leq&\left\|g(s,u_{n}^{\epsilon})-\tilde{g}(s)\right\|^{2}_{L_{2}(H_0,H)}
+\|\hat{g}_n(s)-\tilde{g}(s)\|^{2}_{L_{2}(H_0,H)} \\
&\leq& \|g(s,u^{\epsilon})-g(s,u_{n}^{\epsilon})\|^{2}_{L_{2}(H_0,H)} -\|g(s,u^{\epsilon})-\tilde{g}(s)\|^{2}_{L_{2}(H_0,H)} \\
&&+ 2\left(\tilde{g}(s)-g(s,u_{n}^{\epsilon}), \tilde{g}(s)-g(s, u^{\epsilon})\right)+\|\hat{g}_n(s)-\tilde{g}(s)\|^{2}_{L_{2}(H_0,H)}\\
&\leq& K_{2}\|u_{n}^{\epsilon}-\hat{u}_{n}^{\epsilon}\|^{2} + K_{2}\|\hat{u}_{n}^{\epsilon}-u^{\epsilon}\|^{2} - \|g(s,u^{\epsilon})-\tilde{g}(s)\|^{2}_{L_{2}(H_0,H)} \\
&&+ 2\left(\tilde{g}(s)-g(s,u_{n}^{\epsilon}), \tilde{g}(s)-g(s,u^{\epsilon})\right)+\|\hat{g}_n(s)-\tilde{g}(s)\|^{2}_{L_{2}(H_0,H)},
\end{eqnarray*}
and
\begin{eqnarray*}
&&\text{Re}\left(g(s,u_{n}^{\epsilon}(s))h(s)-\tilde{g}(s)h(s), u_{n}^{\epsilon}(s)-\hat{u}_{n}^{\epsilon}(s)\right)\\
&=& \text{Re}\left(\left(g(s,u_{n}^{\epsilon}(s))h(s)-g(s,\hat{u}_{n}^{\epsilon}(s))\right)h(s), u_{n}^{\epsilon}(s)-\hat{u}_{n}^{\epsilon}(s)\right) \\
&&+ \text{Re}\left(\left(g(s,\hat{u}_{n}^{\epsilon}(s))-\tilde{g}(s)\right)h(s), u_{n}^{\epsilon}(s)-\hat{u}^{\epsilon}_{n}(s)\right)\\
&\leq& \frac{1}{2}\|u_{n}^{\epsilon}(s)-\hat{u}^{\epsilon}_{n}(s)\|^{2} + \frac{1}{2}K_{2} \|h(s)\|^{2}_{H_0} \|u^{\epsilon}_{n}(s)-\hat{u}_{n}^{\epsilon}(s)\|^{2}  \\
&&+ \text{Re}\left(\left(g(s,\hat{u}_{n}^{\epsilon}(s))-\tilde{g}(s)\right)h(s), u_{n}^{\epsilon}(s)-\hat{u}^{\epsilon}_{n}(s)\right).
\end{eqnarray*}
Now multiplying both sides of (\ref{3.16}) by $\phi(t)= \exp\left(-\int_{0}^{t} K_2\|h(s)\|^2_{H_0}+\epsilon K_2+1 ds\right)$ and then taking expectation yields,
\begin{eqnarray*}
&&\mathbb{E}\left[ \phi(t) \|u_{n}^{\epsilon}(t)-\hat{u}_{n}^{\epsilon}(t)\|^{2}\right] + \mathbb{E} \int_{0}^{t}\phi(s) \|u_{n}^{\epsilon}(s)-\hat{u}_{n}^{\epsilon}(s)\|^{2}(1+\epsilon K_2+ K_2\|h(s)\|^2_{H_{0}})ds\\
&&+ \epsilon\mathbb{E}\int_{0}^{t} \phi(s) \|g(s,u^{\epsilon}(s))-\tilde{g}(s)\|^{2}_{L_{2}(H_0,H)}ds\\
&\leq& C\lambda^2\mathbb{E} \int_{0}^{t} \phi(s) \left\|f(\hat{u}_{n}^{\epsilon}(s))-f(u^{\epsilon}(s))\right\|^{2} ds\\&&+ C\lambda\mathbb{E} \int_{0}^{t} \phi(s) \text{Re}\left(f(u^{\epsilon}(s))-\tilde{f}(s), u_{n}^{\epsilon}(s)-\hat{u}_{n}^{\epsilon}(s)\right)ds\\
&& + C\epsilon\mathbb{E} \int_{0}^{t} \phi(s) \left(\tilde{g}(s)-g(s,u_{n}^{\epsilon}(s)), \tilde{g}(s)-g_{n}(s, u^{\epsilon}(s))\right)ds \\
&&+ C \mathbb{E} \int_{0}^{t} \phi(s)\text{Re}\left(\left(\tilde{g}(s)-g(s,\hat{u}_{n}^{\epsilon}(s))\right)h(s), u_{n}^{\epsilon}(s)-\hat{u}_{n}^{\epsilon}(s)\right)ds\\
&&+C\epsilon\mathbb{E} \int_{0}^{t} \phi(s)\left(\|\hat{g}_n(s)-\tilde{g}(s)\|^{2}_{L_{2}(H_0,H)}+K_{2}\|\hat{u}_{n}^{\epsilon}-u^{\epsilon}\|^{2}\right)ds\\
&=& J_{1}+J_{2}+J_{3}+J_{4}+J_{5}.
\end{eqnarray*}
We proceed to estimate each term. Using \eqref{fsigma}, we have,
\begin{eqnarray*}\label{1.44}
J_{1}&\leq& 2^{2\sigma +1} (4\sigma -1)^{2} \mathbb{E} \int_{0}^{t} \phi(s) \left(\|\hat{u}_{n}^{\epsilon}(s)\|_{V}^{4\sigma} + \|u^{\epsilon}(s)\|_{V}^{4\sigma}\right) \|\hat{u}_{n}^{\epsilon}(s)-u^{\epsilon}(s)\|^{2}ds\nonumber\\
&\leq& C(R,\sigma)\left(\mathbb{E} \int_{0}^{t} \|\hat{u}_{n}^{\epsilon}(s)\|^{8\sigma}_{V} + \|u^{\epsilon}(s)\|_{V}^{8\sigma} ds\right)^{1/2}\left(\mathbb{E} \int_{0}^{t} \left\|\hat{u}_{n}^{\epsilon}(s)-u^{\epsilon}(s)\right\|^{4}ds\right)^{1/2}.
\end{eqnarray*}
 Since $\hat{u}_{n}^{\epsilon}=\pi_n u^{\epsilon}$, we have $\hat{u}_{n}^{\epsilon}(\cdot) \rightarrow u^{\epsilon}(\cdot)$ in $L^{4}(0,T;H)$ $\mathbb{P}$ a.s. for fixed $\epsilon>0$ as $n\rightarrow \infty$, combining this with the bound $\hat{u}_{n}^{\epsilon} \in  L^{p}(\Omega \times [0,T];V)$ for all $p\geq 1$, we apply the Vitali convergence theorem (Theorem \ref{6.2} in Appendix) to obtain,
\begin{eqnarray*}
\mathbb{E} \int_{0}^{T} \left\|\hat{u}_{n}^{\epsilon}(t)-u^{\epsilon}(t)\right\|^{4}dt \rightarrow 0, \hspace{.5cm} \text{as } \text{n} \rightarrow \infty.
\end{eqnarray*}
Therefore, noting estimate \eqref{3.7} we conclude that $J_{1} \rightarrow 0$ as $n\rightarrow \infty$. Moreover,
\begin{eqnarray*}
J_2&\leq& C\lambda\mathbb{E} \int_{0}^{t} \phi(s) \text{Re}\left(f(u^{\epsilon}(s))-\tilde{f}(s), u_{n}^{\epsilon}(s)-u^\epsilon(s)\right)ds\\
&&+C\left|\mathbb{E} \int_{0}^{t} \phi(s) \left(f(u^{\epsilon}(s))-\tilde{f}(s), u^{\epsilon}(s)-\hat{u}_{n}^{\epsilon}(s)\right)ds\right|.
\end{eqnarray*}
Now due to the facts that
\begin{eqnarray*}
u_{n}^\epsilon\rightarrow u^\epsilon,~ \text{weak-* in }~L^p(\Omega; \mathcal{C}([0,T];H)),
 \end{eqnarray*}
and
\begin{eqnarray*}
\hat{u}_{n}^\epsilon\rightarrow u^\epsilon,~{\rm in }~L^2(\Omega\times [0,T];H),
 \end{eqnarray*}
together with the bound $\tilde{f}\in L^{p}(\Omega \times [0,T];H)$, yield $|J_2|\rightarrow 0$ as $n\rightarrow\infty$. Similarly, as in estimate (\ref{3.10}), we have $g(s,u^{\epsilon}(s))\in L^{2}(\Omega \times [0,T];L_{2}(H_0, H))$, this along with the convergence $g(s, u_{n}^{\epsilon}(s))\rightharpoonup \tilde{g}$,  in $L^{2}(\Omega \times [0,T]; L_{2}(H_0,H))$ lead to
\begin{eqnarray*}
J_{3}= C\epsilon \mathbb{E}\int_{0}^{t} \phi(s) \left(\tilde{g}(s)-g(s, u_{n}^{\epsilon}(s)), \tilde{g}(s)-g(s, u^{\epsilon}(s))\right)ds \rightarrow 0,
\end{eqnarray*}
as $n\rightarrow \infty$. Furthermore,
\begin{eqnarray*}
J_{4}&=& \mathbb{E}\int_{0}^{t} \phi(s)\text{Re}\left((\tilde{g}(s)-g(s, u^{\epsilon}(s))+ g(s, u^{\epsilon}(s))-g(s, \hat{u}_{n}^{\epsilon}(s)))h(s), u_{n}^{\epsilon}(s)-\hat{u}_{n}^{\epsilon}(s)\right)ds\\
&\leq& \mathbb{E} \int_{0}^{t} \phi(s)\text{Re}\left((\tilde{g}(s)-g(s, u^{\epsilon}(s)))h(s), u_{n}^{\epsilon}(s)-\hat{u}_{n}^{\epsilon}(s)\right)ds\\
&& + \mathbb{E}\int_{0}^{t}\left\|g(s, u^{\epsilon}(s))-g(s, \hat{u}^{\epsilon}_{n}(s))\right\|_{L_{2}(H_0,H)}  \|u_n^{\epsilon}(s)-\hat{u}_{n}^{\epsilon}(s)\| \|h(s)\|_{H_0}ds\\
&=& \mathcal{J}_{1}+\mathcal{J}_{2}.
\end{eqnarray*}
Regarding $\mathcal{J}_{1}$, notice that $u_{n}^{\epsilon}(\cdot)-\hat{u}_{n}^{\epsilon}(\cdot) \rightharpoonup 0$ in $L^{2}(\Omega \times [0,T];H)$ for any fixed $\epsilon>0$ and we ensure that $(\tilde{g}-g(s, u^{\epsilon}(s)))h\in L^{2}(\Omega \times [0,T];H)$ by the following calculation,
\begin{eqnarray*}
&&\mathbb{E}\int_{0}^{T}\|(\tilde{g}(s)-g(s, u^{\epsilon}(s)))h(s)\|^{2}ds \\
&\leq& \mathbb{E}\int_{0}^{T} \left(\|\tilde{g}(s)\|^{2}_{L_{2}(H_0, H)} + \|g(s, u^{\epsilon}(s))\|^{2}_{L_{2}(H_0, H)}\right) \|h(s)\|^{2}_{H_0} ds\\
&\leq& \mathbb{E}\left[\sup_{s\in [0,T]} \left(\|\tilde{g}(s)\|^{2}_{L_{2}(H_0, H)} + \|g(s,u^{\epsilon}(s))\|^{2}_{L_{2}(H_0,H)}\right) \right]\int_{0}^{T} \|h(s)\|_{H_0}^{2}ds\\
&\leq& C(M,T, K_1),
\end{eqnarray*}
implying that $\mathcal{J}_{1} \rightarrow 0$ as $n\rightarrow \infty$. Now Young's inequality may be applied to $\mathcal{J}_{2}$ to obtain,
\begin{eqnarray*}
|\mathcal{J}_{2}| &\leq& K_2\mathbb{E} \left[\sup_{s\in[0,T]} \|u_n^{\epsilon}(s)-\hat{u}_{n}^{\epsilon}(s)\|\right]\int_{0}^{t} \|u^{\epsilon}(s)-\hat{u}_{n}^{\epsilon}(s)\| \|h(s)\|_{H_0}ds\\
&\leq& K_{2}\mathbb{E}\left[\sup_{s\in [0,T]}\|u_n^{\epsilon}(s)-\hat{u}_{n}^{\epsilon}(s)\|\right]\left(\int_0^t\|u^{\epsilon}(s)-\hat{u}_{n}^{\epsilon}(s)\|^2 ds\right)^\frac{1}{2}\left(\int_0^t\|h(s)\|_{H_0}^2ds\right)^\frac{1}{2} \\
&\leq& C(M,T,K_{2}) \left(\mathbb{E}\int_0^t\|u^{\epsilon}(s)-\hat{u}_{n}^{\epsilon}(s)\|^2ds\right)^\frac{1}{2}.
\end{eqnarray*}
Since $\hat{u}_{n}^{\epsilon}\rightarrow u^{\epsilon}$ in $L^2(\Omega\times [0,T]; H)$, we have $\mathcal{J}_2\rightarrow 0$ as $n\rightarrow\infty$. Finally, by the definition of $\hat{g}_n$ and $\hat{u}^\epsilon_n$, we have $J_5\rightarrow 0$ as $n\rightarrow\infty$. Thus,
\begin{eqnarray}\label{3.17}
\mathbb{E} \int_{0}^{t} \|u_{n}^{\epsilon}(s)-\hat{u}_{n}^{\epsilon}(s)\|^{2}ds\rightarrow 0,
\end{eqnarray}
as $n\rightarrow \infty$ and
\begin{eqnarray*}
\tilde{g}(s)=g(s, u^\epsilon(s)),~ \mathbb{P} ~\mbox {a.s.}
\end{eqnarray*}
We proceed to show that $\tilde{f}(s)=f(u^\epsilon)$, $\mathbb{P}$ a.s. Let $\varphi\in V$ and set $B\subset \Omega\times [0,T]$,
\begin{eqnarray*}
&&\mathbb{E}\int_{0}^{t}\left(\tilde{f}(s)-f(u^\epsilon), 1_{B}\varphi\right)ds
=\mathbb{E}\int_{0}^{t}\left(\tilde{f}(s)-f(u_n^\epsilon), 1_{B}\varphi\right)ds\\
&&+\mathbb{E}\int_{0}^{t}\left(f(u_n^\epsilon)-f(\hat{u}_n^\epsilon), 1_{B}\varphi\right)ds+\mathbb{E}\int_{0}^{t}\left(f(\hat{u}_n^\epsilon)-f(u^\epsilon), 1_{B}\varphi\right)ds\\ &&\leq \mathbb{E}\int_{0}^{t}\left(\tilde{f}(s)-f(u_n^\epsilon), 1_{B}\varphi\right)ds+\|\varphi\|\mathbb{E}\int_{0}^{t}\|f(\hat{u}_n^\epsilon)-f(u_n^\epsilon)\|
+\|f(\hat{u}_n^\epsilon)-f(u^\epsilon)\|ds\\ &&\leq C(\sigma,\|\varphi\|)\left(\mathbb{E} \int_{0}^{t} \|\hat{u}_{n}^{\epsilon}(s)\|^{4\sigma}_{V} + \|u^{\epsilon}(s)\|_{V}^{4\sigma}+\|u_n^{\epsilon}(s)\|_{V}^{4\sigma} ds\right)^{1/2}\nonumber\\ &&\times\left(\mathbb{E} \int_{0}^{t} \left\|\hat{u}_{n}^{\epsilon}(s)-u^{\epsilon}(s)\right\|^{2}
+\left\|\hat{u}_{n}^{\epsilon}(s)-u_n^{\epsilon}(s)\right\|^{2}ds\right)^{1/2}+\mathbb{E}\int_{0}^{t}\left(\tilde{f}(s)-f(u_n^\epsilon), 1_{B}\varphi\right)ds.
\end{eqnarray*}
By (\ref{3.13}), (\ref{3.17}) and the strong convergence $\hat{u}^\epsilon_n\rightarrow u^\epsilon$ in $L^2(\Omega\times[0,T];H)$, we have the right hand side going to $0$ as $n\rightarrow\infty$, which implies $\tilde{f}(s)=f(u^\epsilon)$, $\mathbb{P}$ a.s. using the density argument. Hence, we obtain the existence of solutions.

To show the uniqueness of solutions, let $v^{\epsilon}=u_{1}^{\epsilon}-u_{2}^{\epsilon}$ satisfying,
\begin{eqnarray*}
dv^\epsilon+ iAv^\epsilon dt &=& -\lambda(f(u_{1}^{\epsilon})-f(u_{2}^{\epsilon}))dt + \sqrt{\epsilon} (g(t, u_{1}^{\epsilon}(t))-g(t, u_{2}^{\epsilon}(t)))dW \\
&&+(g(t, u_{1}^{\epsilon}(t))-g(t, u_{2}^{\epsilon}(t)))h(t)dt,
\end{eqnarray*}
then applying the It\^{o} formula to $\|v^{\epsilon}(t)\|^{2}$ gives,
\begin{eqnarray*}
d\|v^{\epsilon}(t)\|^{2}&=& -2\lambda \text{Re}(f(u_{1}^{\epsilon}(s))-f(u_{2}^{\epsilon}(s)), v^{\epsilon}(s))ds \\
&&+ 2\sqrt{\epsilon} \text{Re} ( g(s, u_{1}^{\epsilon}(s))-g(s, u_{2}^{\epsilon}(s)),v^{\epsilon}(s))dW\\
&&+2\text{Re}([g(s, u_{1}^{\epsilon}(s))-g(s, u_{2}^{\epsilon}(s))]h(s), v^{\epsilon}(s)) ds \\
&&+ \epsilon\|g(s, u_{1}^{\epsilon}(s))-g(s, u_{2}^{\epsilon}(s))\|^{2}_{L_{2}(H_0,H)}ds.
\end{eqnarray*}
Observe that, by (\ref{2.8}),
\begin{eqnarray}\label{3.18}
&&\left|2\text{Re}([g(s, u_{1}^{\epsilon}(s))-g(s, u_{2}^{\epsilon}(s))]h(s), v^{\epsilon}(s))\right|\leq 2K_{2} \|v^{\epsilon}(s)\|^{2} \|h(s)\|_{H_0},
\end{eqnarray}
this together with (\ref{2.7}), property (\ref{2.3}) and (\ref{3.18}), we have,
\begin{eqnarray}\label{3.19}
d\|v^{\epsilon}(t)\|^{2}&\leq& 2K_{2} \|v^{\epsilon}(s)\|^{2} \|h(s)\|_{H_0}ds+\epsilon K_2\|v^{\epsilon}(s)\|^{2}ds\nonumber\\
&+ &2\sqrt{\epsilon} \text{Re} ( g(s, u_{1}^{\epsilon}(s))-g(s, u_{2}^{\epsilon}(s)),v^{\epsilon}(s))dW.
\end{eqnarray}
Now we multiply both sides of (\ref{3.19}) by $\phi(t)= \exp\left(-\int_{0}^{t} \epsilon K_2 + 2 K_2\|h(s)\|_{H_0}ds\right)$ and then take the expectation to obtain,
\begin{equation*}
\mathbb{E}\phi(s)\|v^{\epsilon}(s)\|^{2}\leq \mathbb{E}\int_0^t\phi(s)\|v^{\epsilon}(s)\|^{2}ds,
\end{equation*}
which implies that $u_{1}^{\epsilon}=u_{2}^{\epsilon}$ $\mathbb{P}$ a.s. for $t\in [0,T]$. This completes the proof of Theorem \ref{theorem1}.

Note that using estimates in \eqref{3.8} the existence and uniqueness of solutions to equation \eqref{controlled} may be obtained following the same lines of reasoning as above.

\section{Large Deviations}
\setcounter{equation}{0}
After achieving the well-posedness of solutions to equations, \eqref{original} and \eqref{controlled}, we now proceed to prove the large deviation principle by verifying the two conditions given in Theorem 6 in \cite{BDM} stated below. We begin by providing the definition of large deviation principle as follows for completeness.

\begin{definition} [Large Deviation Principle]
The sequence $\{X_{n}\}_{n\in \mathbb{N}}$ satisfies the LDP on polish space $\mathcal{E}$ with rate function $I$ if the following two conditions hold, \\
a. LDP lower-bound: for every open set $U\subset \mathcal{E}$,
\begin{eqnarray}\label{closed}
-\inf_{x\in U} I(x) \leq \liminf_{n\rightarrow \infty} \frac{1}{n} \log \mathbb{P}(X_{n} \in U),
\end{eqnarray}
b. LDP upper-bound: for every closed set $C\subset \mathcal{E}$,
\begin{eqnarray}\label{open}
\limsup_{n\rightarrow \infty} \frac{1}{n} \log \mathbb{P}(X_{n}\in C) \leq -\inf_{x\in C}I(x).
\end{eqnarray}
\end{definition}
More background on this area of study may be found in \cite{Dembo, Ellis}. To apply the weak convergence approach, we use the following theorem due to A. Budhiraja, P. Dupuis, V. Maroulas.

\begin{theorem}[Theorem 6 in \cite{BDM}]
Let $\mathcal{E}_{0}$ and $\mathcal{E}$ be two Polish spaces and family $\{X^{\epsilon,x}\}_{\epsilon>0}$ be defined by $X^{\epsilon,x}:= \mathcal{G}(x, \sqrt{\epsilon}W)$ for a measurable map $\mathcal{G}^{\epsilon}:\mathcal{E}_{0} \times \mathcal{C}([0,T];H)\rightarrow \mathcal{E}$. If there exists a measurable map $\mathcal{G}^{0}:\mathcal{E}_{0} \times \mathcal{C}([0,T];H) \rightarrow \mathcal{E}$ such that, \\
$i$. For any fixed $M<\infty$ and compact set $K\subset \mathcal{E}_{0}$,
\begin{eqnarray}
K_{M}= \left\{\mathcal{G}^{0}\left(x,\int_{0}^{\cdot}h(s)ds\right), h\in \mathcal{S}_{M}(H_0), x\in K\right\},
\end{eqnarray}
is a compact subset of $\mathcal{E}$. \\
$ii.$ For $M<\infty$ and $\{h_{\epsilon}\}_{\epsilon>0} \subset \mathcal{S}_{M}(H_0)$ and $\{x^{\epsilon}\}_{\epsilon>0}\subset \mathcal{E}_{0}$ if as $\epsilon$ tends to zero, $h_{\epsilon}$ converges in distribution to $h$ and $x^{\epsilon}$ converges to $x$ then,
\begin{eqnarray}
\mathcal{G}^{\epsilon}\left(x^{\epsilon}, \sqrt{\epsilon}W+ \int_{0}^{\cdot} h_{\epsilon}(s)ds\right)\rightarrow \mathcal{G}^{0}\left(x, \int_{0}^{\cdot}h(s)ds\right),
\end{eqnarray}
in distribution as $\epsilon$ tends to zero.

Then $\{X^{\epsilon, x}\}_{\epsilon>0}$ satisfies the large deviation principle with rate function,
\begin{eqnarray}
I_{x}(f)= \inf_{\left\{h\in L^{2}(0,T;H_0): f= \mathcal{G}^{0}\left(x, \int_{0}^{\cdot}h(s)ds\right)\right\}} \frac{1}{2} \int_{0}^{T}\|h(s)\|_{H_0}^{2}ds,
\end{eqnarray}
where, $x\in \mathcal{E}_{0}$, $f\in \mathcal{E}$.
\end{theorem}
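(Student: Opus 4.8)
The plan is to deduce the large deviation principle from the equivalent \emph{Laplace principle}, the engine being the variational representation of Bou\'e--Dupuis type for functionals of the Hilbert-space valued Wiener process $W$ (see \cite{Budhiraja, BDM}). Recall that for every bounded Borel function $F$ on $\mathcal{C}([0,T];H)$,
\begin{equation*}
-\log\mathbb{E}\left[e^{-F(W)}\right]=\inf_{v}\mathbb{E}\left[\frac{1}{2}\int_{0}^{T}\|v(s)\|_{H_0}^{2}ds+F\left(W+\int_{0}^{\cdot}v(s)ds\right)\right],
\end{equation*}
the infimum running over all $H_0$-valued predictable processes of finite energy. Since an LDP governed by a good rate function is equivalent to the Laplace principle on the Polish space $\mathcal{E}$ (see \cite{Ellis, Dembo}), it suffices to prove two things: that $I_{x}$ is a good rate function, and that for every bounded continuous $f:\mathcal{E}\to\mathbb{R}$,
\begin{equation*}
\lim_{\epsilon\to 0}-\epsilon\log\mathbb{E}\left[e^{-f(X^{\epsilon,x})/\epsilon}\right]=\inf_{y\in\mathcal{E}}\left\{f(y)+I_{x}(y)\right\}.
\end{equation*}

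First I would establish that $I_{x}$ is a good rate function. Fixing $x$, the singleton $K=\{x\}$ is compact, so by hypothesis (i) the set $K_{M}$ is a compact subset of $\mathcal{E}$. Given a level $a$ and a sequence $f_{n}$ with $I_{x}(f_{n})\leq a$, one selects controls $h_{n}$ with $\frac{1}{2}\int_{0}^{T}\|h_{n}\|_{H_0}^{2}ds\leq a+1$ and $f_{n}=\mathcal{G}^{0}(x,\int_{0}^{\cdot}h_{n}ds)$, so that $f_{n}\in K_{2a+2}$ and $h_{n}\in\mathcal{S}_{2a+2}(H_0)$. Since $\mathcal{S}_{M}(H_0)$ is weakly compact, a subsequence of $h_{n}$ converges weakly to some $h\in\mathcal{S}_{2a+2}(H_0)$; applying hypothesis (ii) to these deterministic weakly convergent controls identifies the limit of $f_{n}$ as $\mathcal{G}^{0}(x,\int_{0}^{\cdot}h\,ds)$, and the weak lower semicontinuity of the energy gives $I_{x}$ of that limit $\leq a$. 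Hence the level sets are compact.

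Next I would apply the variational representation with $F=\frac{1}{\epsilon}f(\mathcal{G}^{\epsilon}(x,\sqrt{\epsilon}\,\cdot))$ and rescale $v\mapsto h_{\epsilon}=\sqrt{\epsilon}\,v$, which converts the energy penalty into $\frac{1}{2}\int_{0}^{T}\|h_{\epsilon}\|_{H_0}^{2}ds$ and the shift into $\int_{0}^{\cdot}h_{\epsilon}ds$, yielding
\begin{equation*}
-\epsilon\log\mathbb{E}\left[e^{-f(X^{\epsilon,x})/\epsilon}\right]=\inf_{h_{\epsilon}}\mathbb{E}\left[\frac{1}{2}\int_{0}^{T}\|h_{\epsilon}(s)\|_{H_0}^{2}ds+f\left(\mathcal{G}^{\epsilon}\left(x,\sqrt{\epsilon}W+\int_{0}^{\cdot}h_{\epsilon}(s)ds\right)\right)\right].
\end{equation*}
For the Laplace upper bound I would fix $\delta>0$, choose $y_{0}$ with $f(y_{0})+I_{x}(y_{0})\leq\inf_{y}\{f(y)+I_{x}(y)\}+\delta$ and a deterministic $h$ with $\mathcal{G}^{0}(x,\int_{0}^{\cdot}h\,ds)=y_{0}$ nearly realizing $I_{x}(y_{0})$, then insert this constant control into the representation; hypothesis (ii) (trivially applicable since $h_{\epsilon}\equiv h$) together with boundedness and continuity of $f$ controls the $\limsup$. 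For the Laplace lower bound I would take near-optimal $h_{\epsilon}$; boundedness of $f$ forces an a priori energy bound, so $h_{\epsilon}\in\mathcal{S}_{M}(H_0)$ for some fixed $M$, whence tightness and the Skorokhod representation provide a weak limit $h$, hypothesis (ii) transfers the nonlinear map to $\mathcal{G}^{0}(x,\int_{0}^{\cdot}h\,ds)$, and Fatou together with weak lower semicontinuity of the energy and the bound $\frac{1}{2}\int_{0}^{T}\|h\|_{H_0}^{2}ds\geq I_{x}(\mathcal{G}^{0}(x,\int_{0}^{\cdot}h\,ds))$ yields the $\liminf$.

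The main obstacle I anticipate is the Laplace lower bound: one must localize the random, time-dependent near-minimizers to the weakly compact set $\mathcal{S}_{M}(H_0)$, pass to the joint distributional limit of the pair $\bigl(h_{\epsilon},\mathcal{G}^{\epsilon}(x,\sqrt{\epsilon}W+\int_{0}^{\cdot}h_{\epsilon}ds)\bigr)$ on a possibly enlarged probability space via Skorokhod, and then interchange limit, expectation, and infimum using only the \emph{lower} semicontinuity of the energy functional rather than continuity. Hypothesis (ii) is precisely the statement that licenses the passage of the nonlinear solution map through this distributional limit, so the argument reduces the entire analytic content of the LDP to the two hypotheses and the variational representation.
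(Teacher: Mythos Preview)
The paper does not prove this statement at all: it is quoted verbatim as ``Theorem 6 in \cite{BDM}'' and used as a black box, with the paper's effort going entirely into verifying hypotheses (i) and (ii) in Propositions 4.1 and 4.2 for the specific Schr\"odinger dynamics. Your sketch is a correct outline of how the result is actually proved in \cite{BDM} (and earlier in \cite{Budhiraja}): the Bou\'e--Dupuis variational representation, the equivalence of LDP and Laplace principle for good rate functions, the compactness/lower-semicontinuity argument for goodness of $I_x$, and the upper/lower Laplace bounds via deterministic near-optimal controls and Skorokhod extraction of near-minimizers, respectively. So there is nothing to compare against in this paper; if the intent was to supply the missing proof of the cited tool, your approach is the standard one and is sound at the level of detail given.
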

In the stochastic PDE setting, the maps $\mathcal{G}^{\epsilon}, \mathcal{G}^{0}$ correspond to the unique variational solution to \eqref{original} and (\ref{controlled}), respectively. Thanks to the uniqueness of solutions, $\mathcal{G}^{\epsilon}(u_0^{\epsilon}, \sqrt{\epsilon}W + \int_{0}^{\cdot}h_{\epsilon}(s)ds)$ is the solution of the following stochastic control equation,
\begin{eqnarray}\label{4.6}
\left\{ \begin{array}{ll}
du_{h_\epsilon}^{\epsilon}(t)= - iAu_{h_\epsilon}^{\epsilon}(t)dt - \lambda f(u^{\epsilon}_{h_{\epsilon}}(t))dt +  g(t, u_{h_{\epsilon}}^{\epsilon}(t))h_{\epsilon}(t)dt +\sqrt{\epsilon} g(t, u_{h_{\epsilon}}^{\epsilon}(t))dW,\\
u^{\epsilon}_{h_{\epsilon}}(0)= u_0^\epsilon.
\end{array} \right.
\end{eqnarray}

In order to verify the two conditions needed by Theorem 4.1, we first recall the fractional order Sobolev space $W^{\alpha, p}$, which is required to establish the tightness of the probability measures. It is natural to consider these spaces since the solution of the stochastic evolution system are H\"{o}lder continuous of order strictly less than $\frac{1}{2}$ with respect to time.
Let
\begin{eqnarray*}
W^{\alpha,p}(0,T;X):=\left\{v\in L^{p}(0,T;X):\int_{0}^{T}\int_{0}^{T}\frac{\|v(t_{1})-v(t_{2})\|_{X}^{p}}{|t_{1}-t_{2}|^{1+\alpha p}}dt_{1}dt_{2}<\infty\right\},
\end{eqnarray*}
for any fixed $p>1$ and $\alpha\in(0,1)$, endowed with the norm,
\begin{eqnarray*}
\|v\|^p_{W^{\alpha,p}(0,T;X)}:=\int_{0}^{T}\|v(t)\|_{X}^{p}dt+\int_{0}^{T}\int_{0}^{T}\frac{\|v(t_{1})-v(t_{2})\|_{X}^{p}}{|t_{1}-t_{2}|^{1+\alpha p}}dt_{1}dt_{2},
\end{eqnarray*}
where $X$ is a separable Hilbert space.
For the case $\alpha=1$, we take,
\begin{eqnarray*}
W^{1,p}(0,T;X):=\left\{v\in L^{p}(0,T;X):\frac{dv}{dt}\in L^{p}(0,T;X)\right\},
\end{eqnarray*}
which is the classical Sobolev space with its usual norm,
\begin{eqnarray*}
\|v\|_{W^{1,p}(0,T;X)}^{p}:=\int_{0}^{T}\|v(t)\|_{X}^{p}+\left\|\frac{dv}{dt}(t)\right\|_{X}^{p}dt.
\end{eqnarray*}
Note that for $\alpha\in(0,1)$, the embedding of $ W^{1,p}(0,T;X)$ into $W^{\alpha,p}(0,T;X)$ also holds.

\begin{lemma}\label{lem4.1} For $\{h_{\epsilon}\}_{\epsilon >0} \subset \mathcal{S}_{M}(H_{0})$, the following convergence holds  $\mathbb{P}$ \mbox{a.s.}
\begin{eqnarray}
u^\epsilon_{h_{\epsilon}}\rightarrow u_h, ~{\rm in}~ L^2(0,T;H),
\end{eqnarray}
as $\epsilon\rightarrow 0$, where $u^\epsilon_{h_\varepsilon},
u_h$ are solutions corresponding to systems (\ref{4.6}), and equation (\ref{controlled}), respectively.
\end{lemma}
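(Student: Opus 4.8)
The plan is to establish this $L^2(0,T;H)$ convergence by a compactness/Skorokhod argument, identify the limit using the uniqueness of Theorem~\ref{theorem1}, and return to the original probability space via the Gy\"ongy--Krylov lemma, exactly in the spirit of the second half of Section~3.

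First I would record the uniform-in-$\epsilon$ bounds. Since \eqref{4.6} is of the form \eqref{original} with control $h=h_{\epsilon}\in\mathcal S_{M}(H_{0})$, Theorem~\ref{theorem1} (and the computations behind \eqref{3.6}--\eqref{3.7}) give, for every $p\geq1$,
\[
\sup_{\epsilon\in[0,1]}\Big(\mathbb E\big[\sup_{t\in[0,T]}\|u^{\epsilon}_{h_{\epsilon}}(t)\|^{p}\big]+\mathbb E\int_{0}^{T}\|u^{\epsilon}_{h_{\epsilon}}(t)\|_{V}^{p}\,dt\Big)\leq C(M,T,p,K_{1},\mathbb E\|u_{0}\|_{V}^{p}),
\]
the constant being $\epsilon$-uniform precisely because $\int_{0}^{T}\|h_{\epsilon}\|_{H_{0}}^{2}\,ds\leq M$. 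Using the equation together with the growth bounds \eqref{2.4}, \eqref{2.6}, the Burkholder inequality \eqref{2.2}, and the standard fractional-in-time estimate for the stochastic integral (here $\sqrt\epsilon\leq1$ makes it $\epsilon$-uniform), one also obtains $\sup_{\epsilon}\mathbb E\|u^{\epsilon}_{h_{\epsilon}}\|_{W^{\alpha,2}(0,T;V^{*})}<\infty$ for some $\alpha\in(0,\tfrac12)$. By the compact embedding $L^{2}(0,T;V)\cap W^{\alpha,2}(0,T;V^{*})\hookrightarrow\hookrightarrow L^{2}(0,T;H)$ this yields tightness of the laws of $u^{\epsilon}_{h_{\epsilon}}$ on $L^{2}(0,T;H)$; since $\mathcal S_{M}(H_{0})$ with its weak metric is a compact metric space and $M^{\epsilon}:=\sqrt\epsilon\int_{0}^{\cdot}g(s,u^{\epsilon}_{h_{\epsilon}}(s))\,dW\to0$ in $\mathcal C([0,T];H)$ (again by \eqref{2.2}, \eqref{2.6}), the family $\{(u^{\epsilon}_{h_{\epsilon}},h_{\epsilon},M^{\epsilon})\}_{\epsilon}$ is tight on $\mathcal Z:=L^{2}(0,T;H)\times\mathcal S_{M}(H_{0})\times\mathcal C([0,T];H)$.

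Next, along a subsequence, Prokhorov's and Skorokhod's theorems provide a probability space carrying copies $(\tilde u^{\epsilon},\tilde h_{\epsilon},\tilde M^{\epsilon})$ of $(u^{\epsilon}_{h_{\epsilon}},h_{\epsilon},M^{\epsilon})$ converging a.s. in $\mathcal Z$ to $(\tilde u,\tilde h,0)$; the bounds above transfer to $\tilde u^{\epsilon}$, and since the integrated form of \eqref{4.6} holds $\mathbb P$-a.s. and involves only these three components it is preserved by the copies. I would then pass to the limit in the variational form of that identity tested against fixed $\phi\in V$: the term $iA\tilde u^{\epsilon}$ by weak $L^{2}(0,T;V)$ convergence; the nonlinear term $f(\tilde u^{\epsilon})\to f(\tilde u)$ via strong $L^{2}(0,T;H)$ convergence, the uniform higher-order $V$-bounds, the local Lipschitz estimate \eqref{2.5} and Vitali's theorem (Theorem~\ref{6.2}), exactly as in the identification of $\tilde f$ in Section~3; and the control term via the splitting
\[
g(s,\tilde u^{\epsilon})\tilde h_{\epsilon}=\big(g(s,\tilde u^{\epsilon})-g(s,\tilde u)\big)\tilde h_{\epsilon}+g(s,\tilde u)\big(\tilde h_{\epsilon}-\tilde h\big),
\]
the first piece vanishing by \eqref{2.8} and $\|\tilde h_{\epsilon}\|_{L^{2}(0,T;H_{0})}\leq\sqrt M$, the second vanishing because $\tilde h_{\epsilon}\rightharpoonup\tilde h$ in $L^{2}(0,T;H_{0})$ while $g(\cdot,\tilde u)^{*}\phi\in L^{2}(0,T;H_{0})$; the $\tilde M^{\epsilon}$ term disappears. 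Hence $\tilde u$ solves \eqref{controlled} with control $\tilde h$, i.e. $\tilde u=\mathcal G(\int_{0}^{\cdot}\tilde h\,ds)$, unique by the controlled analogue of Theorem~\ref{theorem1}. Since $\tilde h_{\epsilon}$ has the law of $h_{\epsilon}$ and $h_{\epsilon}\to h$ in $\mathcal S_{M}(H_{0})$, $\tilde h=h$ a.s., so $\tilde u=u_{h}$; as this identifies the limit along every subsequence, the Gy\"ongy--Krylov lemma with the uniqueness of $u_{h}$ yields $u^{\epsilon}_{h_{\epsilon}}\to u_{h}$ in probability in $L^{2}(0,T;H)$ on the original probability space, and hence $\mathbb P$-a.s. along a subsequence.

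I expect the main obstacle to be the identification of the limit equation: passing to the limit in the super-linear term $f(\tilde u^{\epsilon})$ with only strong $L^{2}(0,T;H)$ convergence available, and in the control term $g(s,\tilde u^{\epsilon})\tilde h_{\epsilon}$, a product of a strongly convergent and a merely weakly convergent factor; both are handled by exploiting the uniform $L^{p}(\Omega\times(0,T);V)$ bounds and testing against fixed $\phi\in V$. A secondary point is verifying that all a priori bounds — in particular the fractional-in-time one used for the compact embedding — are genuinely uniform in $\epsilon\in[0,1]$. One could alternatively avoid Skorokhod by writing $u^{\epsilon}_{h_{\epsilon}}-u_{h}=(u^{\epsilon}_{h_{\epsilon}}-u_{h_{\epsilon}})+(u_{h_{\epsilon}}-u_{h})$, bounding the first difference directly through It\^o's formula for $\|u^{\epsilon}_{h_{\epsilon}}-u_{h_{\epsilon}}\|^{2}$ — in which $iA$ cancels, the $f$-difference has the good sign \eqref{2.3}, the drift $g$-term is absorbed by the Gronwall weight $\exp\!\big(-2\sqrt{K_{2}}\int_{0}^{\cdot}\|h_{\epsilon}\|_{H_{0}}\,ds\big)$, and the remaining terms are $O(\epsilon)$ by \eqref{2.2} and \eqref{2.6} — and handling the second difference by the continuity of $h\mapsto u_{h}$; but the route above matches the strategy announced in the introduction.
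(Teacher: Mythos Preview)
Your proposal is correct and follows essentially the same route as the paper: uniform $L^{p}(\Omega\times(0,T);V)$ and fractional-in-time $V^{*}$ bounds, tightness on $L^{2}(0,T;H)$ via the compact embedding, Skorokhod representation, identification of the limit as the solution of \eqref{controlled}, and the Gy\"ongy--Krylov lemma to return to the original probability space. The only cosmetic difference is that you package $(u^{\epsilon}_{h_{\epsilon}},h_{\epsilon},M^{\epsilon})$ jointly and identify the limit before invoking Gy\"ongy--Krylov, whereas the paper carries only $u^{\epsilon}_{h_{\epsilon}}$ through Skorokhod and performs the identification afterwards; your ordering is arguably cleaner.
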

\begin{proof} We first show that
\begin{eqnarray}
&&\mathbb{E}\left\|u_{h_{\epsilon}}^\epsilon-\sqrt{\epsilon} \int_{0}^{t} g(s, u_{h_{\epsilon}}^{\epsilon}(s))dW\right\|_{W^{1,2}(0,T;V')}^2\leq C,\label{4.8}\\
&&\mathbb{E}\left\|\sqrt{\epsilon} \int_{0}^{t} g(s, u_{h_{\epsilon}}^{\epsilon}(s))dW\right\|_{W^{\alpha,p}(0,T;V')}^p\leq C, \label{4.9}
\end{eqnarray}
for $\alpha\in [0,\frac{1}{2})$ and $p\in [1,\infty)$, where $C$ is a constant independent of $\epsilon$. We proceed as follows,
\begin{eqnarray*}
u_{h_{\epsilon}}^{\epsilon}(t) &=& u_{0}^{\epsilon} - i\int_{0}^{t} Au_{h_{\epsilon}}^{\epsilon}(s)ds - \lambda \int_{0}^{t} f(u_{h_{\epsilon}}^{\epsilon}(s))ds + \int_{0}^{t} g(s, u_{h_{\epsilon}}^{\epsilon}(s))h_{\epsilon}(s)ds \\ &&+\sqrt{\epsilon} \int_{0}^{t} g(s, u_{h_{\epsilon}}^{\epsilon}(s))dW\\
&=& I_{0}+I_{1}+I_{2}+I_{3}+I_{4}.
\end{eqnarray*}
Note that by definition,
\begin{eqnarray*}
\mathbb{E}\|I_{1}\|^{2}_{W^{1,2}(0,T;V')}&=&\mathbb{E} \int_{0}^{T} \|Au_{h_{\epsilon}}^{\epsilon}(s)\|_{V'}^{2}ds+\mathbb{E} \int_{0}^{T}\left\|i\int_{0}^{t} Au_{h_{\epsilon}}^{\epsilon}(s)ds \right\|_{V'}dt\\
&\leq& C(T)\mathbb{E} \int_{0}^{T} \|Au_{h_{\epsilon}}^{\epsilon}(s)\|_{V'}^{2}ds\\ &\leq& C(T)\mathbb{E} \int_{0}^{T} \|u^{\epsilon}_{h_{\epsilon}}(s)\|_{V}^{p}ds \leq C(M,p,T,K_1).
\end{eqnarray*}
 Furthermore, the nonlinear term can be controlled using (\ref{2.4}) as follows,
\begin{eqnarray*}
\lambda \mathbb{E} \int_{0}^{T} \left\|\int_{0}^{t} |u_{h_{\epsilon}}^{\epsilon}(s)|^{2\sigma} u_{h_{\epsilon}}^{\epsilon}(s)ds\right\|_{V'}^{2}dt&\leq& \lambda T \mathbb{E} \int_{0}^{T}\||u_{h_{\epsilon}}^{\epsilon}(t)|^{2\sigma}u_{h_{\epsilon}}^{\epsilon}(t)\|_{V'}^{2}dt\\ &\leq& \lambda T \mathbb{E} \int_{0}^{T}\||u_{h_{\epsilon}}^{\epsilon}(t)|^{2\sigma}u_{h_{\epsilon}}^{\epsilon}(t)\|^{2}dt\\&\leq& C(\lambda, T, M, \sigma)\mathbb{E} \int_{0}^{T}\|u_{h_{\epsilon}}^{\epsilon}(t)\|_{V}^{4\sigma+2}dt\\ &\leq& C(\lambda, M,T, p, K_1, \sigma).
\end{eqnarray*}
Moreover, by (\ref{2.6})
\begin{eqnarray*}
\mathbb{E} \int_{0}^{T} \|I_{3}\|_{V'}^{2}dt &\leq& \mathbb{E} \int_{0}^{T} \left|\int_{0}^{t} \|g(s, u_{h_{\epsilon}}^{\epsilon}(s))\|_{L_{2}(H_0,H)} \|h_{\epsilon}(s)\|_{H_0} ds \right|^{2}dt \\
&\leq& C\mathbb{E} \int_{0}^{T} \int_{0}^{t}K_{1}(1+\|u_{h_{\epsilon}}^{\epsilon}(s)\|^{2})ds \int_{0}^{t} \|h_{\epsilon}(s)\|_{H_0}^{2}dsdt\\
&\leq& CMTK_{1}\mathbb{E} \int_{0}^{T} (1+ \|u_{h_{\epsilon}}^{\epsilon}(s)\|^{2})ds\leq C(M,T,K_1).
\end{eqnarray*}
Thus, we obtain the bound (\ref{4.8}). For bound (\ref{4.9}), the Burkholder-Davis-Gundy inequality (\ref{2.2}) may be applied as follows,
\begin{eqnarray*}
&&\mathbb{E}\left\|\sqrt{\epsilon} \int_{0}^{t} g(s, u_{h_{\epsilon}}^{\epsilon}(s))dW\right\|_{W^{\alpha,p}(0,T;V')}^p
\leq
\mathbb{E}\int_{0}^{T}\int_{0}^{T}\frac{\left\| \sqrt{\epsilon} \int_{s}^{t} g(r, u_{h_{\epsilon}}^{\epsilon}(r))dW\right\|^p_{V'}}{|t-s|^{1+\alpha p}}dsdt\\
&\leq& \int_{0}^{T}\int_{0}^{T}\frac{\epsilon^\frac{p}{2}\mathbb{E}\left[\int_{s}^{t}\|g(r, u_{h_{\epsilon}}^{\epsilon}(r))\|_{V'}^2dr\right]^{\frac{p}{2}}}{|t-s|^{1+\alpha p}}dsdt\\
&\leq& \epsilon^\frac{p}{2}K_1^\frac{p}{2}\int_{0}^{T}\int_{0}^{T}\frac{(t-s)^\frac{p}{2}\cdot\mathbb{E}\left[1+\sup_{r\in [0,T]}\|u_{h_{\epsilon}}^{\epsilon}(r)\|^p\right]}{|t-s|^{1+\alpha p}}dsdt\\
&\leq& C(\epsilon, T, p, K_1)\int_{0}^{T}\int_{0}^{T}|t-s|^{1+\left(\alpha-\frac{1}{2}\right)p}dtds\leq C( T, p, K_1).
\end{eqnarray*}

Define the set of probability measures,
\begin{eqnarray}\nu^\epsilon(\mathcal{O})=\mathbb{P}(u^\epsilon_{h_{\epsilon}}\in \mathcal{O}),
\end{eqnarray}
for any set $\mathcal{O}\in \mathcal{B}(X)$, where $X$ is the path space $L^2(0,T; H)$.
Based on the bounds (\ref{4.8}), (\ref{4.9}), and the fact that $L^{2}(0,T;V) \cap W^{\alpha,p}(0,T;V')$ is compactly embedded in $L^{2}(0,T;H)$ for any $p\in (1,\infty)$, $\alpha\in (0,1)$, we have that the set of probability measures $\{\nu^\epsilon\}_{\epsilon>0}$ is tight on the path space $L^2(0,T; H)$.
Indeed, note that
\begin{eqnarray*}
B_{K}:=\left\{u^\epsilon_{h_{\epsilon}}\in L^{2}(0,T; V)\cap W^{\alpha,p}(0,T;V'): \|u^\epsilon_{h_{\epsilon}}\|_{L^{2}(0,T;V)}+\|u^\epsilon_{h_{\epsilon}}\|_{W^{\alpha,p}(0,T;V')}\leq K\right\},
\end{eqnarray*}
 is compact on $L^{2}(0,T; H)$. Moreover, the bounds (\ref{4.8}), (\ref{4.9}), along with the Chebyshev inequality give,
\begin{eqnarray*}
\nu^{\epsilon}\left(B_{K}^c\right)&=&\mathbb{P}(\|u^\epsilon_{h_{\epsilon}}\|_{L^{2}(0,T;V)}+\|u^\epsilon_{h_{\epsilon}}\|_{W^{\alpha,p}(0,T;V')}> K)\\ &\leq& \frac{1}{K}\mathbb{E}(\|u^\epsilon_{h_{\epsilon}}\|_{L^{2}(0,T;V)}+\|u^\epsilon_{h_{\epsilon}}\|_{W^{\alpha,p}(0,T;V')})\leq \frac{C}{K}.
\end{eqnarray*}

Using the Skorokhod representation theorem (see Appendix), we then deduce the existence of a stochastic basis $(\widetilde{\Omega}, \widetilde{\mathcal{F}}, \{\widetilde{\mathcal{F}}_{t}\}_{t}, \widetilde{\mathbb{P}})$ in which there are processes $\{\tilde{u}^{\epsilon}_{h_{\epsilon}}\}_{\epsilon>0}$, such that $\tilde{u}^{\epsilon}_{h_{\epsilon}}$ has the same distribution as $u^{\epsilon}_{h_{\epsilon}}$ and $\widetilde{\mathbb{P}}$ a.s. $\tilde{u}^{\epsilon}_{h_{\epsilon}}$ converges to $\tilde{u}_{h}$ in $L^{2}(0,T;H)$ as $\epsilon\rightarrow 0$. With the uniqueness of solutions, we recover the convergence on the original probability space $(\Omega, \mathcal{F},\{\mathcal{F}_t\}_{t>0}, \mathbb{P})$ using Gy\"{o}ngy-Krylov's lemma, for more details, see \cite{ours}, that is,
\begin{eqnarray}\label{con}
u^{\epsilon}_{h_{\epsilon}}\rightarrow u_h, ~{\rm  in} ~L^{2}(0,T;H), ~\mathbb{P}~ \mbox{a.s.}
\end{eqnarray}
  In the spirit of \cite{Bessaih}, we now show that $u_h$ is the solution to equation (2.11). For any $\phi\in H^1$, and set $B\subset \Omega\times [0,T]$,
\begin{eqnarray}\label{4.11}
&&(u_{h_{\epsilon}}^\epsilon(t), 1_B \phi)-\left(i\int_{0}^{t} \langle Au_h, 1_B \phi\rangle ds - \lambda \int_{0}^{t} (f(u_h),1_B\phi)ds + \int_{0}^{t}( g(s, u_h)h,1_B\phi)ds\right)\nonumber\\
&=&i\int_{0}^{t} (\nabla u_{h_{\epsilon}}^\epsilon-\nabla u_h, 1_B \nabla\phi)ds-\lambda \int_{0}^{t} (f(u^\epsilon_{h_{\epsilon}})-f(u_h),1_B\phi)ds\nonumber\\&&\quad+\sqrt{\epsilon}\int_{0}^{t} ( g(s, u_{h_{\epsilon}}^{\epsilon}(s)), 1_B\phi) dW
+\int_{0}^{t}( g(s,u^\epsilon_{h_{\epsilon}})h_{\epsilon}-g(s, u_h)h,1_B\phi)ds.
\end{eqnarray}
We now show that the expectation of the right hand side of (\ref{4.11}) tends to $0$ as $\epsilon\rightarrow 0$. Since $u_{h_{\epsilon}}^\epsilon\rightarrow u_h$ weakly in $L^{p}(\Omega\times [0,T]; V)$ for $p\geq 1$ we have,
\begin{eqnarray}
\mathbb{E}\int_{0}^{t} i(\nabla u_{h_{\epsilon}}^\epsilon-\nabla u_h, 1_B \nabla\phi)ds\rightarrow 0,~ {\rm as}~ \epsilon\rightarrow 0.
\end{eqnarray}
Inequalities (\ref{2.5}), (\ref{2.13}) and the Cauchy-Schwarz inequality yield,
\begin{eqnarray}
&&\lambda\mathbb{E}\int_{0}^{t}(f(u^\epsilon_{h_{\epsilon}})-f(u_h),1_B\phi)ds \leq \lambda\|\phi\|\mathbb{E}\int_{0}^{t}\|f(u^\epsilon_{h_{\epsilon}})-f(u_h)\|ds\nonumber\\
&\leq& C(\lambda, \|\phi\|, \alpha)\mathbb{E}\int_{0}^{t}(\|u^\epsilon_{h_{\epsilon}}\|_{V}^{2\sigma}
+\|u_{h_{\epsilon}}\|_{V}^{2\sigma})\|u^\epsilon_{h_{\epsilon}}-u_h\|ds\nonumber\\
&\leq& C(\lambda, \|\phi\|, \alpha)\left(\mathbb{E}\int_{0}^{t}\|u^\epsilon_{h_{\epsilon}}\|_{V}^{4\sigma}
+\|u_{h}\|_{V}^{4\sigma}ds\right)^\frac{1}{2}\left(\mathbb{E}\int_{0}^{t}\|u^\epsilon_{h_{\epsilon}}-u_h\|^2ds\right)^\frac{1}{2}\nonumber\\
&\leq& C(\lambda,\|\phi\|, T, p, \alpha)\left(\mathbb{E}\int_{0}^{t}\|u^\epsilon_{h_{\epsilon}}-u_h\|^2ds\right)^\frac{1}{2}\rightarrow 0,
\end{eqnarray}
where the last step is deduced from the result of (\ref{con}) along with the Vitali convergence theorem. Using the Burkholder-Davis-Gundy inequality (\ref{2.2}) and condition (\ref{2.6}), we have as $\epsilon\rightarrow 0$,
\begin{eqnarray}
\mathbb{E}\left[\sqrt{\epsilon}\int_{0}^{t} ( g(s, u_{h_{\epsilon}}^{\epsilon}(s)), 1_B \phi ) dW\right]
&\leq& \sqrt{\epsilon}\mathbb{E}\left(\int_0^t( g(s, u_{h_{\epsilon}}^{\epsilon}(s)), 1_B \phi )^2ds\right)^\frac{1}{2}\nonumber\\
&\leq &\|\phi\|\sqrt{K_1 T\epsilon}\mathbb{E}\left[\sup_{t\in[0,T]}(1+\|u_{h_{\epsilon}}^{\epsilon}(t)\|)\right]\nonumber\\&\leq& C(\|\phi\|, T, K_1)\sqrt{\epsilon}\rightarrow 0.
\end{eqnarray}
With regards to the last term in \eqref{4.11}, we use the decomposition,
\begin{eqnarray*}
&&\quad\mathbb{E}\int_{0}^{t}( g(s,u^\epsilon_{h_{\epsilon}})h_{\epsilon}-g(s, u_h)h,1_B\phi)ds\nonumber\\
&&=\mathbb{E}\int_{0}^{t}((g(s,u^\epsilon_{h_{\epsilon}})-g(s, u_h))h_{\epsilon},1_B\phi)ds+\mathbb{E}\int_{0}^{t}(g(s, u_h)(h_{\epsilon}-h),1_B\phi)ds\nonumber \\&&=J_1+J_2.
\end{eqnarray*}
Applying condition (\ref{2.8}) and the H\"{o}lder inequality, $J_1$ can be bounded as follows,
\begin{eqnarray}\label{4.16}
|J_1|&\leq& K_2\|\phi\| \mathbb{E}\int_{0}^{t}\|u^\epsilon_{h_{\epsilon}}-u_h\|\|h_{\epsilon}\|_{H_0}ds\nonumber\\ &\leq& C( K_2, \|\phi\|,M)\left(\mathbb{E}\int_{0}^{t}\|u^\epsilon_{h_{\epsilon}}-u_h\|^2ds\right)^\frac{1}{2}\rightarrow 0.
\end{eqnarray}
Finally, by condition (\ref{2.6}) and bound in (\ref{3.8}), we have,
\begin{eqnarray}\label{4.17}
\int_{0}^{t}\|g^*(s, u_h)1_B \phi \|^2_{H_0}ds\leq K_1\|\phi\|^2\int_0^t(1+\|u_h\|^2)ds\leq C(K_1, \|\phi\|, T).
\end{eqnarray}
Now the set $\mathcal{S}_{M}$ being closed and bounded implies that $h_{\epsilon}\rightarrow h$ weakly in $L^{2}(0,T;H_0)$ $\mathbb{P}$ a.s. and with $(\ref{4.17})$ we have that $\mathbb{P}$ a.s.
\begin{eqnarray}\label{4.18}
\int_{0}^{t}(h_{\epsilon}-h,g^*(s, u_h)1_B\phi)ds\rightarrow 0.
\end{eqnarray}
Furthermore, (\ref{4.18}) and the Vitali convergence theorem give $J_2\rightarrow 0$ as $\epsilon\rightarrow 0$. Combining these estimates, we arrive at,
\begin{eqnarray}\label{4.19}
&&\mathbb{E}\left[(u_{h_{\epsilon}}^\epsilon, 1_B \phi)-\left(i\int_{0}^{t} \langle Au_h, 1_B \phi\rangle ds - \lambda \int_{0}^{t} (f(u_h),1_B\phi)ds +\int_{0}^{t}( g(s, u_h)h,1_B\phi)ds\right)\right]\nonumber \\ &&\rightarrow 0, ~~ {\rm  as} ~~\epsilon\rightarrow 0.
\end{eqnarray}
On the other hand, since $u_{h_{\epsilon}}^\epsilon\in L^p(\Omega;\mathcal{C}([0,T];H))$ uniformly in $\epsilon$, we have by the Banach-Alaoglu theorem, as $\epsilon\rightarrow 0$,
\begin{eqnarray}\label{4.20}
\mathbb{E}\left[\sup_{t\in [0,T]}\left(u_{h_{\epsilon}}^\epsilon(t)-u_h(t),1_B\phi\right)\right]\rightarrow 0,
\end{eqnarray}
  and thus we infer from (\ref{4.19}) and (\ref{4.20}) that $u_h$ is a solution of equation (\ref{controlled}) completing the proof.
\end{proof}

For $p\geq 1$, let the polish space
\begin{eqnarray}
\mathcal{X}:=\mathcal{C}([0,T]; H)\cap L^p(0,T;V),
\end{eqnarray}
be endowed with the norm $\|u\|^2_{\mathcal{X}}=\sup_{t\in [0,T]}\|u(t)\|^2$.
\begin{proposition}\label{pro4.1}
 For a positive constant $M$, if $\{h_{\epsilon}\}_{\epsilon>0}$ is a sequence in $\mathcal{S}_{M}(H_0)$ such that $h_{\epsilon}$ converges in distribution to $h$ as $\epsilon \rightarrow 0$ then,
 \begin{eqnarray*}
 \mathcal{G}^{\epsilon}\left(\sqrt{\epsilon}W + \int_{0}^{\cdot}h_{\epsilon}(s)ds\right) \rightarrow \mathcal{G}^{0}\left(\int_{0}^{\cdot}h(s)ds\right),
 \end{eqnarray*}
 in distribution in $\mathcal{X} $as $\epsilon \rightarrow 0$, where $\mathcal{G}^{\epsilon}(\sqrt{\epsilon}W)$ denotes the unique variational solution of equation \eqref{2.1}.
\end{proposition}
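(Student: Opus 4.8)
The plan is to upgrade the $L^{2}(0,T;H)$-convergence supplied by Lemma~\ref{lem4.1} to convergence in $\mathcal{C}([0,T];H)$, which is exactly the topology carried by $\mathcal{X}$ since $\|\cdot\|_{\mathcal{X}}=\sup_{t\in[0,T]}\|\cdot\|$. By the identifications recalled above, $\mathcal{G}^{\epsilon}\big(\sqrt{\epsilon}W+\int_{0}^{\cdot}h_{\epsilon}(s)ds\big)=u_{h_{\epsilon}}^{\epsilon}$ solves \eqref{4.6} and $\mathcal{G}^{0}\big(\int_{0}^{\cdot}h(s)ds\big)=u_{h}$ solves the skeleton equation \eqref{controlled}; since Lemma~\ref{lem4.1} has already identified $u_{h}$ as this (unique) solution, it suffices to prove
\begin{eqnarray}\label{supconv}
\mathbb{E}\Big[\sup_{t\in[0,T]}\big\|u_{h_{\epsilon}}^{\epsilon}(t)-u_{h}(t)\big\|^{2}\Big]\longrightarrow 0\qquad\text{as }\epsilon\to 0,
\end{eqnarray}
which yields convergence in probability, hence in distribution, of $u_{h_{\epsilon}}^{\epsilon}$ to $u_{h}$ in $\mathcal{X}$.

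To establish \eqref{supconv} I would set $v^{\epsilon}:=u_{h_{\epsilon}}^{\epsilon}-u_{h}$, which solves
\begin{eqnarray*}
dv^{\epsilon}+iAv^{\epsilon}dt=-\lambda(f(u_{h_{\epsilon}}^{\epsilon})-f(u_{h}))dt+(g(t,u_{h_{\epsilon}}^{\epsilon})h_{\epsilon}-g(t,u_{h})h)dt+\sqrt{\epsilon}\,g(t,u_{h_{\epsilon}}^{\epsilon})dW,
\end{eqnarray*}
with $v^{\epsilon}(0)=u_{0}^{\epsilon}-u_{0}$, which vanishes (or tends to $0$ in $H$). Applying the It\^o formula to $\|v^{\epsilon}(t)\|^{2}$, the decisive cancellation is that $\langle Av^{\epsilon},v^{\epsilon}\rangle=\|\partial_{x}v^{\epsilon}\|^{2}$ is real, so the drift contribution of $-iAv^{\epsilon}$ equals $2\,\text{Im}\langle Av^{\epsilon},v^{\epsilon}\rangle=0$; this is exactly why no bound on $\int_{0}^{T}\|v^{\epsilon}\|_{V}^{p}$ is obtained, and why the convergence can be asserted only in $\mathcal{C}([0,T];H)$ rather than in the full solution space. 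The nonlinear term $-2\lambda\,\text{Re}(f(u_{h_{\epsilon}}^{\epsilon})-f(u_{h}),v^{\epsilon})$ is $\le 0$ by the monotonicity \eqref{2.3} and is discarded. Splitting $g(t,u_{h_{\epsilon}}^{\epsilon})h_{\epsilon}-g(t,u_{h})h=(g(t,u_{h_{\epsilon}}^{\epsilon})-g(t,u_{h}))h_{\epsilon}+g(t,u_{h})(h_{\epsilon}-h)$, the first summand contributes at most $2\sqrt{K_{2}}\,\|h_{\epsilon}(s)\|_{H_{0}}\|v^{\epsilon}(s)\|^{2}$ by \eqref{2.8}, a Gronwall term whose coefficient integrates to at most $2\sqrt{K_{2}TM}$ since $h_{\epsilon}\in\mathcal{S}_{M}(H_{0})$; the It\^o correction $\epsilon\|g(t,u_{h_{\epsilon}}^{\epsilon})\|_{L_{2}(H_{0},H)}^{2}\le\epsilon K_{1}(1+\|u_{h_{\epsilon}}^{\epsilon}\|^{2})$ is $O(\epsilon)$ by \eqref{2.12}; and, after passing to $\sup_{t\in[0,T]}$, the stochastic integral is handled with the Burkholder--Davis--Gundy inequality \eqref{2.2}, giving $\sqrt{\epsilon}\,\mathbb{E}\big(\int_{0}^{T}\|g(s,u_{h_{\epsilon}}^{\epsilon})\|_{L_{2}(H_{0},H)}^{2}\|v^{\epsilon}\|^{2}ds\big)^{1/2}\le\frac{1}{2}\mathbb{E}[\sup_{t}\|v^{\epsilon}\|^{2}]+C\epsilon$, absorbed on the left.

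Multiplying by the bounded adapted weight $\phi(t)=\exp\big(-\int_{0}^{t}(2\sqrt{K_{2}}\|h_{\epsilon}(s)\|_{H_{0}}+\epsilon K_{2})\,ds\big)$, exactly as in the uniqueness part of Theorem~\ref{theorem1}, and collecting the above estimates reduces \eqref{supconv} to showing
\begin{eqnarray*}
\mathbb{E}\Big[\sup_{t\in[0,T]}\Big|\int_{0}^{t}2\,\text{Re}\big(g(s,u_{h}(s))(h_{\epsilon}(s)-h(s)),v^{\epsilon}(s)\big)\,ds\Big|\Big]\longrightarrow 0.
\end{eqnarray*}
This control-mismatch term, in which $h_{\epsilon}-h$ is only bounded in $L^{2}(0,T;H_{0})$, is the step I expect to require the most care; the point is that, thanks to the cancellations above, no weak-convergence argument is actually needed. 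I would rewrite the integrand as $(h_{\epsilon}(s)-h(s),\,g^{*}(s,u_{h}(s))v^{\epsilon}(s))_{H_{0}}$ and, using $\phi\le 1$, the Cauchy--Schwarz inequality in time, $\|h_{\epsilon}-h\|_{L^{2}(0,T;H_{0})}\le 2\sqrt{M}$, and $\|g^{*}(s,u_{h})v^{\epsilon}\|_{H_{0}}\le\|g(s,u_{h})\|_{L_{2}(H_{0},H)}\|v^{\epsilon}\|$, dominate the supremum by $2\sqrt{M}\,\big(\sup_{s\in[0,T]}\|g(s,u_{h}(s))\|_{L_{2}(H_{0},H)}\big)\big(\int_{0}^{T}\|v^{\epsilon}(s)\|^{2}ds\big)^{1/2}$. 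By \eqref{2.7} and the deterministic a priori bounds for $u_{h}$ inherited from \eqref{3.8} the first factor is bounded, while the $\mathbb{P}$-a.s. $L^{2}(0,T;H)$-convergence of Lemma~\ref{lem4.1}, the uniform moment bound \eqref{2.12} and the Vitali convergence theorem (Theorem~\ref{6.2}) give $\mathbb{E}\int_{0}^{T}\|v^{\epsilon}(s)\|^{2}ds\to 0$; Jensen's inequality then makes the whole expression vanish. Hence \eqref{supconv} holds, so $u_{h_{\epsilon}}^{\epsilon}\to u_{h}$ in $L^{2}(\Omega;\mathcal{C}([0,T];H))$, in particular in distribution in $\mathcal{X}$, which verifies the second condition of Theorem~4.1 and completes the proof.
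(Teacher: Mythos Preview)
Your argument is correct and follows essentially the same route as the paper: apply It\^o's formula to $\|u_{h_{\epsilon}}^{\epsilon}-u_{h}\|^{2}$, use the cancellation of the $iA$ term and the monotonicity \eqref{2.3} to drop the nonlinearity, control the stochastic integral and It\^o correction by $O(\sqrt{\epsilon})$ via BDG and \eqref{2.6}, and reduce the remaining control-term contribution to the $L^{2}(0,T;H)$ convergence of Lemma~\ref{lem4.1} together with Vitali. The only cosmetic differences are that the paper splits the control term as $g(s,u_{h_{\epsilon}}^{\epsilon})(h_{\epsilon}-h)+(g(s,u_{h_{\epsilon}}^{\epsilon})-g(s,u_{h}))h$ and bounds both pieces directly by $C\big(\mathbb{E}\int_{0}^{t}\|V^{\epsilon}\|^{2}\big)^{1/2}$ (avoiding your Gronwall weight $\phi$), and it localizes with the stopping time $\tau_{N}=\inf\{t:\|V^{\epsilon}(t)\|^{2}>N\}$ before taking the supremum and letting $N\to\infty$.
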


\begin{proof}
 Denote $\mathcal{G}^{\epsilon}\left(\sqrt{\epsilon} W + \int_{0}^{\cdot} h_{\epsilon}(s)ds\right)$ as the variational solution of \eqref{4.6}. Due to the uniqueness of solutions, we have $\mathcal{G}^{\epsilon}\left(\sqrt{\epsilon} W + \int_{0}^{\cdot} h_{\epsilon}(s)ds\right)=u_{h_{\epsilon}}^\epsilon(\cdot)$. Let $V^{\epsilon}(t):= u_{h_{\epsilon}}^{\epsilon}(t)-u_{h}(t)$, then $V^\epsilon(t)$ satisfies
\begin{eqnarray*}
V^{\epsilon}(t) &=& -i\int_{0}^{t} AV^{\epsilon}(s)ds - \lambda \int_{0}^{t} f(u^{\epsilon}_{h_{\epsilon}}(s))-f(u_{h}(s))ds+ \sqrt{\epsilon} \int_{0}^{t} g(s, u_{h_{\epsilon}}^{\epsilon}(s))dW \\
&&+ \int_{0}^{t} g(s ,u_{h_{\epsilon}}^{\epsilon}(s))(h_{\epsilon}(s)-h(s))ds+ \int_{0}^{t}(g(s, u_{h_{\epsilon}}^{\epsilon}(s))-g(s, u_{h}(s)))h(s)ds .
\end{eqnarray*}
For $N\in \mathbb{N}$, let $\tau_{N}:= \inf\left\{t: \|V^{\epsilon}(t)\|^{2}>N\right\}$, and $\tau_{N}=T$ if the set is empty. Applying the It\^{o} formula to $\|V^\epsilon\|^2$, then taking the supremum over interval $[0, t\wedge \tau_{N}]$, and afterwards the expectation we obtain,
\begin{eqnarray*}
\mathbb{E}\left[\sup_{s\in[0, t\wedge \tau_{N}]} \|V^{\epsilon}(s)\|^{2}\right] &\leq& 2\mathbb{E}\text{Im} \int_{0}^{t\wedge \tau_{N}} \|\nabla V^{\epsilon}(s)\|^{2}ds \\
&&- 2\lambda \mathbb{E}\text{Re} \int_{0}^{t\wedge \tau_{N}} \left(f(u^{\epsilon}_{h_{\epsilon}}(s))-f(u_{h}(s)), V^{\epsilon}(s)\right)ds\\
&& + 2 \mathbb{E} \left|{\rm Re} \int_{0}^{t\wedge \tau_{N}} \left(g(s, u_{h_{\epsilon}}^{\epsilon}(s))\left(h_{\epsilon}(s)-h(s)\right), V^{\epsilon}(s)\right)ds\right| \\
&&+ 2\mathbb{E} \left|{\rm Re}\int_{0}^{t\wedge \tau_{N}}\left(\left(g(s, u^{\epsilon}_{h_{\epsilon}}(s))-g(s, u_{h}(s))\right)h(s),V^{\epsilon}(s)\right)ds\right|\\
&&+ 2 \sqrt{\epsilon}\mathbb{E}\left[\sup_{s\in [0, t\wedge \tau_{N}]} \left|{\rm Re}\int_{0}^{s} \left(g(r, u_{h_{\epsilon}}^{\epsilon}(r))dW, V^{\epsilon}(r)\right)\right|\right]\\&&+ \epsilon \mathbb{E} \int_{0}^{t\wedge \tau_{N}} \|g(s, u_{h_{\epsilon}}^{\epsilon}(s))\|_{L_{2}(H_0,H)}^{2}ds.
\end{eqnarray*}
By the properties of $f(\cdot)$, conditions on $g(\cdot)$, the Cauchy-Schwartz inequality and Burkholder-Davis-Gundy inequality (\ref{2.2}), we arrive at,

\begin{eqnarray*}
&&\mathbb{E}\left[\sup_{s\in [0, t\wedge \tau_{N}]}\|V^{\epsilon}(s)\|^{2}\right]\\ &\leq& 2 \mathbb{E} \int_{0}^{t \wedge \tau_{N}} \|g(s, u_{h_{\epsilon}}^{\epsilon}(s))\|_{L_{2}(H_0,H)} \|h_{\epsilon}(s)-h(s)\|_{H_0} \|V^{\epsilon}(s)\|ds\\
&&+ 2\mathbb{E} \int_{0}^{t\wedge \tau_{N}} \|h(s)\|_{H_0} \|g(s, u_{h_{\epsilon}}^{\epsilon}(s))-g(s, u_{h}(s))\|_{L_{2}(H_0,H)} \|V^{\epsilon}(s)\|ds\\
&& + 2\sqrt{\epsilon} \mathbb{E} \left(\int_{0}^{t\wedge \tau_{N}} \|g(s, u^{\epsilon}_{h_{\epsilon}}(s))\|_{L_{2}(H_0,H)}^{2} \|V^{\epsilon}(s)\|^{2}ds \right)^{1/2}\\
&& + \epsilon \mathbb{E} \int_{0}^{t\wedge \tau_{N}} K_{1}(1+ \|u_{h_{\epsilon}}^{\epsilon}(s)\|^{2})ds\\
&=& I_{1} + I_{2}+I_{3}+I_{4}.
\end{eqnarray*}
Next, we show that the terms on the right hand side converge to zero as $\epsilon\rightarrow 0$. For $I_1$,
\begin{eqnarray*}
|I_{1}| &\leq& 2K_1\mathbb{E} \int_{0}^{t \wedge \tau_{N}}(1+\|u^\epsilon_{h_{\epsilon}}\|)  \|h_{\epsilon}(s)-h(s)\|_{H_0} \|V^{\epsilon}(s)\|ds\\&\leq&2 K_1\mathbb{E}\left[\sup_{t\in[0,T]}(1+\|u^\epsilon_{h_{\epsilon}}\|)\left(\int_{0}^{t\wedge \tau_{N}}\|h_{\epsilon}(s)-h(s)\|_{H_{0}}^{2}ds\right)^{1/2} \left(\int_{0}^{t\wedge \tau_{N}} \|V^{\epsilon}(s)\|^{2}ds\right)^{1/2}\right]\\
&\leq& 2 K_1 M^\frac{1}{2} \left(\mathbb{E}\int_{0}^{t\wedge \tau_{N}}\|V^{\epsilon}(s)\|^{2}ds\right)^\frac{1}{2} \left(\mathbb{E}\left[\sup_{t\in[0,T]}(1+\|u^\epsilon_{h_{\epsilon}}\|)^2\right]\right)^\frac{1}{2}\\
&\leq& C(M, K_1,T)\left(\mathbb{E}\int_{0}^{t\wedge \tau_{N}} \|V^{\epsilon}(s)\|^{2}ds\right)^\frac{1}{2}.
\end{eqnarray*}
Similarly, using the bounds (\ref{2.12}), (\ref{2.13}),
\begin{eqnarray*}
|I_{2}| &\leq& 2K_2\sqrt{M}\left(\mathbb{E}\int_{0}^{t\wedge \tau_{N}} \|V^{\epsilon}(s)\|^{2}ds\right)^\frac{1}{2} \left(\mathbb{E}\left[\sup_{s\in [0,T]}\|V^\epsilon(s)\|^2\right]\right)^\frac{1}{2}\\
&\leq& C(K_2, M, T) \left(\mathbb{E}\int_{0}^{t\wedge \tau_{N}} \|V^{\epsilon}(s)\|^{2}ds\right)^\frac{1}{2}.
\end{eqnarray*}
Moreover,
\begin{eqnarray*}
|I_{3}| &\leq& 2\sqrt{\epsilon} \mathbb{E}\left[\int_{0}^{t\wedge \tau_{N}}K_{1}(1+\|u_{h_{\epsilon}}^{\epsilon}(s)\|^{2})\|V^{\epsilon}(s)\|^{2}ds\right]^{1/2}\\
&\leq& 2 K_1 \sqrt{\epsilon T}\mathbb{E}\left[\sup_{s\in[0,T]}\|V^{\epsilon}(s)\|(1+\|u_{h_{\epsilon}}^{\epsilon}(s)\|)\right]\\
&\leq &C(K_1,T)\sqrt{\epsilon}.
\end{eqnarray*}

Combining these estimates, we conclude,
\begin{eqnarray}\label{4.22}
\mathbb{E}\left[\sup_{s\in[0, t\wedge \tau_{N}]}\|V^{\epsilon}(s)\|^{2}\right]\leq C(K_1,T)\sqrt{\epsilon}+C( K_1, K_2, T, M)\left(\mathbb{E}\int_{0}^{t\wedge \tau_{N}} \|V^{\epsilon}(s)\|^{2}ds\right)^\frac{1}{2}.
\end{eqnarray}
Finally, the second term in \eqref{4.22} converges to zero as $\epsilon \rightarrow 0$ by the Vitali convergence theorem leading to,
\begin{eqnarray*}
\lim_{\epsilon\rightarrow 0}\mathbb{E}\left[\sup_{ s\in [0, t\wedge \tau_{N}]}\|V^{\epsilon}(s)\|^{2}\right]=0,
\end{eqnarray*}
then letting $N\rightarrow \infty$, we achieve the desired result.
\end{proof}

\begin{proposition}
For any positive constant $M$, the set,
\begin{eqnarray}
K_{M}= \left\{\mathcal{G}^{0}\left(\int_{0}^{\cdot}h(s)ds\right): h\in \mathcal{S}_{M}(H_0)\right\},
\end{eqnarray}
is compact in $\mathcal{X}$.
\end{proposition}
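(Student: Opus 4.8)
The plan is to use that $\mathcal{S}_M(H_0)$, being a bounded, closed and convex subset of the separable Hilbert space $L^2(0,T;H_0)$, is weakly sequentially compact, and to show that the solution map $h\mapsto u_h=\mathcal{G}^0\big(\int_0^\cdot h(s)\,ds\big)$ is sequentially continuous from $\mathcal{S}_M(H_0)$, equipped with the weak topology, into $\mathcal{X}$; then $K_M$ is the continuous image of a compact set and is therefore compact. Concretely, I would take an arbitrary sequence $\{u_{h_n}\}_{n\ge1}\subset K_M$, pass to a subsequence (not relabelled) with $h_n\rightharpoonup h$ in $L^2(0,T;H_0)$, so that $h\in\mathcal{S}_M(H_0)$, and prove that $u_{h_n}\to u_h$ in $\mathcal{X}$ along a further subsequence; since $\mathcal{X}$ is a metric space, this yields the compactness of $K_M$. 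The whole argument is the deterministic counterpart of Lemma \ref{lem4.1} together with Proposition \ref{pro4.1}.

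First I would record the uniform bounds. Rerunning the estimates behind \eqref{3.8} and \eqref{4.8}--\eqref{4.9} for the skeleton equation \eqref{controlled}, in which the stochastic integral is absent, gives a constant $C$ independent of $n$ with
\begin{equation*}
\sup_{t\in[0,T]}\|u_{h_n}(t)\|_V^2+\Big\|\tfrac{d}{dt}u_{h_n}\Big\|_{L^2(0,T;V')}^2\le C .
\end{equation*}
Since $V\hookrightarrow\hookrightarrow H\hookrightarrow V'$, the compact embedding of $L^2(0,T;V)\cap W^{1,2}(0,T;V')$ into $L^2(0,T;H)$ used in Lemma \ref{lem4.1} then provides a further subsequence along which $u_{h_n}\to u$ strongly in $L^2(0,T;H)$, while $u_{h_n}\rightharpoonup u$ weakly in $L^2(0,T;V)$ and weak-$\ast$ in $L^\infty(0,T;V)$.

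Next I would identify the limit by letting $n\to\infty$, for fixed $\phi\in V$, in $(u_{h_n}(t),\phi)=(u_0,\phi)-i\int_0^t\langle Au_{h_n},\phi\rangle\,ds-\lambda\int_0^t(f(u_{h_n}),\phi)\,ds+\int_0^t(g(s,u_{h_n})h_n,\phi)\,ds$. The linear term converges by $u_{h_n}\rightharpoonup u$ in $L^2(0,T;V)$. The nonlinear term is treated with \eqref{2.5}, the Cauchy--Schwarz inequality and the $L^\infty(0,T;V)$ bound, which give $\int_0^t|(f(u_{h_n})-f(u),\phi)|\,ds\le C(\sigma,\|\phi\|)\big(\int_0^t\|u_{h_n}\|_V^{4\sigma}+\|u\|_V^{4\sigma}\,ds\big)^{1/2}\big(\int_0^t\|u_{h_n}-u\|^2\,ds\big)^{1/2}\to 0$. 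For the multiplicative control term I would use the splitting $g(s,u_{h_n})h_n-g(s,u)h=(g(s,u_{h_n})-g(s,u))h_n+g(s,u)(h_n-h)$: the first piece vanishes by \eqref{2.8}, the Cauchy--Schwarz inequality and the strong $L^2(0,T;H)$ convergence, using that $\{h_n\}$ is bounded in $L^2(0,T;H_0)$; the second vanishes because $h_n\rightharpoonup h$ in $L^2(0,T;H_0)$ and $s\mapsto g(s,u(s))^\ast\phi$ belongs to $L^2(0,T;H_0)$ by \eqref{2.6} (cf.\ the computation \eqref{4.17}). Hence $u$ is a variational solution of \eqref{controlled} with control $h$, so by uniqueness of such solutions (valid for \eqref{controlled}, as noted after Theorem \ref{theorem1}) we get $u=u_h=\mathcal{G}^0\big(\int_0^\cdot h(s)\,ds\big)\in K_M$.

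Finally I would upgrade the convergence to $\mathcal{X}$. Since $\|\cdot\|_{\mathcal{X}}=\sup_{t\in[0,T]}\|\cdot\|$, it remains to show $\sup_{t\in[0,T]}\|u_{h_n}(t)-u_h(t)\|\to 0$. Applying the chain rule to $\|w_n(t)\|^2$ with $w_n:=u_{h_n}-u_h$, the diffusion term disappears because $\text{Re}\,\langle iAw_n,w_n\rangle=0$, and the nonlinear difference $-2\lambda\,\text{Re}(f(u_{h_n})-f(u_h),w_n)$ is nonpositive by the monotonicity in \eqref{2.3}. With the same splitting of the control term, $|\text{Re}((g(s,u_{h_n})-g(s,u_h))h_n,w_n)|\le c\|h_n\|_{H_0}\|w_n\|^2$ by \eqref{2.8} and is absorbed into a Gronwall weight $\exp(-c\int_0^\cdot\|h_n(s)\|_{H_0}\,ds)$, exactly as in the uniqueness argument of Theorem \ref{theorem1}, while $\int_0^T\|g(s,u_h)(h_n-h)\|\,\|w_n\|\,ds\le\big(\int_0^T\|g(s,u_h)(h_n-h)\|^2\,ds\big)^{1/2}\|w_n\|_{L^2(0,T;H)}$, the first factor being bounded uniformly in $n$ by \eqref{2.6} and $h_n,h\in\mathcal{S}_M(H_0)$. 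Gronwall's inequality then gives $\sup_{t\in[0,T]}\|w_n(t)\|^2\le C(K_1,K_2,M,T)\,\|w_n\|_{L^2(0,T;H)}$, which tends to $0$ by the previous step; hence $u_{h_n}\to u_h$ in $\mathcal{C}([0,T];H)$ and therefore in $\mathcal{X}$. I expect the identification of the limit to be the main obstacle, for it is exactly there that the weak-only convergence $h_n\rightharpoonup h$ has to be reconciled with the nonlinearity and the multiplicative control term via the splitting above; the cancellation $\text{Re}\,\langle iAw_n,w_n\rangle=0$ and the monotonicity of $f$ are then what make the last step possible, boosting $L^2(0,T;H)$ convergence to uniform-in-time convergence in the absence of any parabolic smoothing.
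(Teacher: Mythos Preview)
Your proposal is correct and follows essentially the same approach as the paper: uniform bounds plus the Aubin--Lions compactness to get strong $L^2(0,T;H)$ convergence along a subsequence, identification of the limit as the skeleton solution $u_h$, and then an energy estimate on the difference $w_n=u_{h_n}-u_h$ that upgrades $L^2(0,T;H)$ convergence to $\sup_{t\in[0,T]}\|w_n(t)\|\to 0$. The paper actually omits the identification-of-the-limit argument you spell out (it just asserts ``It can be shown that $u_h$ is the solution corresponding to \eqref{controlled}''), and in the final estimate the paper bounds the control term directly via Cauchy--Schwarz and the uniform $L^\infty(0,T;H)$ bound on $V^\epsilon$ rather than using a Gronwall weight, arriving at the same conclusion $\sup_{s\in[0,t]}\|V^\epsilon(s)\|^2\le C\big(\int_0^t\|V^\epsilon(s)\|^2\,ds\big)^{1/2}$; these are minor technical variants, not substantive differences.
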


\begin{proof}
It suffices to show that $K_{M}$ is sequentially compact. Let $\{h_{\epsilon}\}_{\epsilon >0}$ be a sequence in $\mathcal{S}_{M}(H_0)$ with the corresponding solutions $\{u_{h_{\epsilon}}\}_{\epsilon >0}$ to \eqref{original}. Note that $u_{h_{\epsilon}}$ is uniformly bounded in $\mathcal{C}([0,T]; H)\cap L^p(0,T;V)$ and it is not difficult to obtain that $u_{h_{\epsilon}}$ is uniformly bounded in $W^{1,2}(0,T; V')$ using a similar argument as in lemma \ref{lem4.1}. Then, the Aubin-Lions compactness embedding lemma (stated in Appendix),
\begin{eqnarray*}
 L^2(0,T;V)\cap W^{1,2}(0,T; V')\hookrightarrow L^{2}(0,T;H),
\end{eqnarray*}
implies that there exists a subsequence of $u_{h_{\epsilon}}$ still denoted by $u_{h_{\epsilon}}$, and a function $u_{h}$, such that $u_{h_{\epsilon}}\rightarrow u_h$ in $L^{2}(0,T; H)$ as $\epsilon\rightarrow 0$. It can be shown that $u_h$ is the solution corresponding to equation (\ref{controlled}) using the fact that $\{h_{\epsilon}\}_{\epsilon>0}$  has a subsequence still denoted here as $\{h_{\epsilon}\}_{\epsilon>0}$, which converges in distribution to an $h\in \mathcal{S}_{M}(H_0)$.

Let $V^{\epsilon}(t):= u_{h_{\epsilon}}(t)-u_{h}(t)$ given by,
\begin{eqnarray*}
V^{\epsilon}(t) &=& -i\int_{0}^{t} AV^{\epsilon}(s)ds - \lambda \int_{0}^{t}f(u_{h_{\epsilon}}(s))-f(u_{h}(s))ds\\&& + \int_{0}^{t} g(s, u_{h_{\epsilon}}(s))(h_{\epsilon}(s)-h(s))ds
+ \int_{0}^{t}(g(s, u_{h_{\epsilon}}(s))-g(s, u_{h}(s)))h(s)ds.
\end{eqnarray*}
Taking the inner product with $V^\epsilon$, then the supremum on time interval $[0, t]$, yields,
\begin{eqnarray*}
\sup_{s\in[0, t]} \|V^{\epsilon}(s)\|^{2} &\leq& 2\text{Im} \int_{0}^{t} \|\nabla V^{\epsilon}(s)\|^{2}ds - 2\lambda\text{Re} \int_{0}^{t} \left(f(u_{h_{\epsilon}}(s))-f(u_{h}(s)), V^{\epsilon}(s)\right)ds\\
&& + 2\left|{\rm Re}\int_{0}^{t}\left(g(s, u_{h_{\epsilon}}(s))\left(h_{\epsilon}(s)-h(s)\right), V^{\epsilon}(s)\right)ds\right| \\
&&+ 2\left|{\rm Re}\int_{0}^{t}\left(\left(g(s, u_{h_{\epsilon}}(s))-g(s, u_{h}(s))\right)h(s),V^{\epsilon}(s)\right)ds\right|.
\end{eqnarray*}
Conditions (\ref{2.6}), (\ref{2.8}) and the Cauchy-Schwarz inequality lead to,
\begin{eqnarray}\label{4.25}
 &&\quad\left|{\rm Re}\int_{0}^{t} \left(g(s, u_{h_{\epsilon}}(s))\left(h_{\epsilon}(s)-h(s)\right), V^{\epsilon}(s)\right)ds\right|\nonumber\\ &&\leq
  K_1\int_{0}^{t}\|V^{\epsilon}(s)\|\|h_{\epsilon}(s)-h(s)\|_{H_0}(1+\|u_{h_{\epsilon}}(s)\|)ds\nonumber\\ &&\leq
  K_1 \left[\sup_{s\in [0,t]}\left(1+\|u_{h_{\epsilon}}(s)\|\right)\right]\left(\int_0^t\|h_{\epsilon}(s)-h(s)\|_{H_0}^2ds\right)^\frac{1}{2}
  \left(\int_0^t\|V^\epsilon(s)\|^2ds\right)^\frac{1}{2}\nonumber\\ &&\leq C(K_1, T, M)\left(\int_0^t\|V^\epsilon(s)\|^2ds\right)^\frac{1}{2},
\end{eqnarray}
and
\begin{eqnarray}\label{4.26}
&&\left|{\rm Re}\int_{0}^{t}\left(\left(g(s, u_{h_{\epsilon}}(s))-g(s, u_{h}(s))\right)h(s),V^{\epsilon}(s)\right)ds\right|\nonumber\\ &\leq&
K_2\int_{0}^{t}\|V^{\epsilon}(s)\|^2\|h(s)\|_{H_0}ds\nonumber\\ &\leq& K_2 \left[\sup_{s\in [0,t]}\|V^\epsilon(s)\|\right]\left(\int_0^t\|h(s)\|_{H_0}^2ds\right)^\frac{1}{2}
  \left(\int_0^t\|V^\epsilon(s)\|^2ds\right)^\frac{1}{2}\nonumber\\ &\leq& C(K_2, T, M)\left(\int_0^t\|V^\epsilon(s)\|^2ds\right)^\frac{1}{2}.
\end{eqnarray}
Combining estimates (\ref{4.25}) and (\ref{4.26}), and using property in (\ref{2.3}), we conclude,
\begin{eqnarray}
\sup_{s\in [0, t]} \|V^{\epsilon}(s)\|^{2}\leq C(K_1, K_2, T, M)\left(\int_0^t\|V^\epsilon(s)\|^2ds\right)^\frac{1}{2},
\end{eqnarray}
for all $t\in [0,T]$. Hence, the strong convergence $V^\epsilon\rightarrow 0$ in $L^2(0,T;H)$ leads to the compactness of the solution in $\mathcal{X}$.
\end{proof}
{\bf Proof of Theorem 2.2}.  Combining Propositions 4.1 and 4.2, we obtain the result of Theorem 2.2 according to Theorem 4.1.

\section{Exit problem}
\setcounter{equation}{0}

As an application of the LDP, we consider the exit problem and make the remark that since large deviations was proved in space $\mathcal{C}([0,T];H)$ only, then the exit problem is limited to this space. Recall the exit time $\tau^{\epsilon}$ given by \eqref{tau} and notice that,
\begin{eqnarray*}
\mathbb{P}(\tau^{\epsilon}>T_{0}) = \mathbb{P}\left(\sup_{t\in [0, T_{0}]}\|u^{\epsilon}(t)\|^{2}\leq r\right).
\end{eqnarray*}
By the It\^{o} formula we proceed as follows,
\begin{eqnarray*}
\|u^{\epsilon}(t)\|^{2}&=& \|u_0\|^{2} -2\text{Im} \int_{0}^{t} \|\nabla u^{\epsilon}(s)\|^2ds -2\lambda \text{Re} \int_{0}^{t} (f(u^{\epsilon}(s)),u^{\epsilon}(s))ds\\
&&+ 2\sqrt{\epsilon} \text{Re} \int_{0}^{t} (g(s, u^{\epsilon}(s))dW, u^{\epsilon}(s)) + \epsilon \int_{0}^{t} \|g(s,u^{\epsilon}(s))\|_{L_{2}(H_0,H)}^{2}ds.
\end{eqnarray*}
Now by taking the supremum on time up to time $T_{0}$, using condition (\ref{2.6}) and property in (\ref{2.3}), we have,
\begin{eqnarray*}
&&\sup_{t\in [0, T_{0}]} \|u^{\epsilon}(t)\|^{2} \\ &&\leq \|u_0\|^{2} + 2\sqrt{\epsilon} \sup_{t\in [0, T_{0}]} \text{Re} \int_{0}^{t} \left(g(s, u^{\epsilon}(s))dW, u^{\epsilon}(s)\right) +\epsilon K_{1}T_{0}
 + \epsilon K_{1}T_{0}\sup_{t\in [0, T_{0}]} \|u^{\epsilon}(t)\|^{2},
\end{eqnarray*}
arriving at,
\begin{equation*}
\sup_{t\in [0, T_{0}]} \|u^{\epsilon}(t)\|^{2}\leq \frac{1}{1-\epsilon K_{1}T_{0}} \left(\|u_0\|^{2} + 2 \sqrt{\epsilon} \sup_{t\in [0, T_{0}]} \text{Re}\int_{0}^{t} \left(g(s, u^{\epsilon}(s))dW, u^{\epsilon}(s)\right) + \epsilon K_{1}T_{0}\right).
\end{equation*}
Thus, we obtain,
\begin{eqnarray*}
&&\mathbb{P}(\tau^{\epsilon}\leq T_{0})= \mathbb{P}\left(\sup_{t\in [0, T_{0}]}\|u^{\epsilon}(t)\|^{2} >r\right)\\
&&\leq \mathbb{P}\bigg(\|u_0\|^{2} + 2 \sqrt{\epsilon} \sup_{t\in [0, T_{0}]} \text{Re}\int_{0}^{t}\left(g(s, u^{\epsilon}(s))dW, u^{\epsilon}(s)\right)+ \epsilon K_{1}T_{0} > r \left(1-\epsilon K_{1}T_{0}\right)\bigg),
\end{eqnarray*}
where Doob's inequality may be applied to yield,
\begin{eqnarray*}
&&P(\tau^{\epsilon}\leq T_{0})\\&\leq& \mathbb{P}\left(2\sqrt{\epsilon} \sup_{t\in [0, T_{0}]}\text{Re}\int_{0}^{t}\left(g(s, u^{\epsilon}(s))dW, u^{\epsilon}(s)\right) > r(1-\epsilon K_{1}T_{0}) - \|u_0\|^{2} - \epsilon K_{1}T_{0}\right)\\
&\leq& \frac{2\sqrt{\epsilon}}{r\left(1-\epsilon K_{1}T_{0}\right)-\|u_0\|^{2} - \epsilon K_{1}T_{0}}\mathbb{E}\left[\int_{0}^{T_{0}}  \|g(s,u^{\epsilon}(s))\|_{L_{2}(H_0, H)}^{2}\|u^{\epsilon}(s)\|^{2}ds\right]^\frac{1}{2}\\
&\leq& \frac{2\sqrt{\epsilon K_{1}}}{r\left(1-\epsilon K_{1}T_{0}\right)-\|u_0\|^{2} -\epsilon K_{1}T_{0}} \mathbb{E} \left[\int_{0}^{T_{0}} (1+\|u^{\epsilon}(s)\|^{2})\|u^{\epsilon}(s)\|^{2}ds\right]^\frac{1}{2}\\
&\leq& \frac{2 \sqrt{\epsilon K_{1}}C(T_0, u_0)}{r\left(1-\epsilon K_{1}T_{0}\right)- \|u_0\|^{2} - \epsilon K_{1}T_{0}}.
\end{eqnarray*}
Therefore, we obtain the upperbound given in \eqref{exit}. Observe that the above upperbound complies what is expected in the physical sense. More precisely, the likelihood of the exit time to occur before a given time, $T_{0}$ becomes negligible as the radius of the domain becomes arbitrary large or as the noise is set to diminish to zero by letting $\epsilon$ tend to zero. For the lowerbound, the large deviation bound \eqref{open} may be used to obtain for the open set $D_{T_{0}}^{c}$,
\begin{eqnarray*}
\liminf_{\epsilon \rightarrow 0} \epsilon \log \mathbb{P}\left(\sup_{t\in [0, T_{0}]}\|u^{\epsilon}(t)\|^{2}>r\right)= \liminf_{\epsilon \rightarrow 0} \epsilon \log \mathbb{P}\left(u^\epsilon\in D_{T_{0}}^c\right)\geq - \inf_{v\in D_{T_{0}}^{c}} I(v).
\end{eqnarray*}
Thus, we have for a fixed $\delta >0$, there exists some $\epsilon_{0}$ such that for all $\epsilon \in (0,\epsilon_{0})$,
\begin{eqnarray*}
\mathbb{P}(\tau^{\epsilon}\leq T_{0}) \geq \exp\left(-\frac{1}{\epsilon}\left(\inf_{v\in D_{T_{0}}^{c}}I(v)+ \delta\right)\right).
\end{eqnarray*}
In addition, the upperbound \eqref{open} may be used to obtain,
\begin{eqnarray*}
\mathbb{P}(\tau^{\epsilon}>T_0) &\leq& \mathbb{P}\left(\sup_{t\in [0,T_{0}]} \|u^{\epsilon}(t)\|^{2}\leq r\right)
\leq\exp\left(-\frac{1}{\epsilon} \left(\inf_{v\in D_{T_{0}}} I(v)-\delta\right) \right),
\end{eqnarray*}
for a fixed $\delta >0$ and for all $\epsilon \in (0,\epsilon_{1})$ for some $\epsilon_{1}>0$. This together with the strong Markov property of $u^{\epsilon}(t)$ can be used to apply an inductive argument given in \cite{exit} to arrive at,
\begin{equation*}
\mathbb{P}(\tau^{\epsilon} >k) \leq \left(\sup_{u^\epsilon \in D_{1}} \mathbb{P}_{u^{\epsilon}(t)} (\tau^{\epsilon} >1)\right)^{k}.
\end{equation*}
Hence, we have,
\begin{eqnarray*}
\mathbb{E}(\tau^{\epsilon}) &\leq& \sum_{k=0}^{\infty} \mathbb{P}(\tau^{\epsilon}\geq k) \leq \sum_{k=0}^{\infty} \left(\sup_{u^{\epsilon}(t) \in D_{1} } \mathbb{P}_{u^{\epsilon}(t)} (\tau^{\epsilon}>1)\right)^{k}\\
&=& \frac{1}{1- \exp\left(-\frac{1}{\epsilon} \left(\inf_{v\in D_{1}} I(v) - \delta\right)\right)},
\end{eqnarray*}
obtaining the mean exit time.

\section{Appendix}

Here we collect some lemmas that are used in the paper.

\begin{lemma} {\rm (The Aubin-Lions lemma)} Suppose that $X_{1}\subset X_{0}\subset X_{2}$ are Banach spaces and $X_{1}$ and $X_{2}$ are reflexive and the embedding of $X_{1}$ into $X_{0}$ is compact.
Then for any $1<p<\infty,~ 0<\alpha<1$, the embedding,
\begin{equation*}
L^{p}(0,T;X_{1})\cap W^{\alpha,p}(0,T;X_{2})\hookrightarrow L^{p}(0,T;X_{0}),
\end{equation*}
is compact.
\end{lemma}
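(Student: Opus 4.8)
\emph{Proof proposal.} The plan is to deduce the compact embedding from the vector-valued Riesz--Fr\'echet--Kolmogorov compactness criterion in $L^{p}(0,T;X_{0})$: a bounded family $\mathcal{F}\subset L^{p}(0,T;X_{0})$ is relatively compact provided (a) for every $0<a<b<T$ the set of means $\{\int_{a}^{b}u(t)\,dt:u\in\mathcal{F}\}$ is relatively compact in $X_{0}$, and (b) translations are equicontinuous, i.e. $\sup_{u\in\mathcal{F}}\|u(\cdot+h)-u(\cdot)\|_{L^{p}(0,T-h;X_{0})}\to0$ as $h\to0^{+}$. I would therefore fix a set $\mathcal{F}$ bounded in $L^{p}(0,T;X_{1})\cap W^{\alpha,p}(0,T;X_{2})$, say $\|u\|_{L^{p}(0,T;X_{1})}\le M$ and $[u]_{W^{\alpha,p}(0,T;X_{2})}\le M$ for every $u\in\mathcal{F}$, observe that it is bounded in $L^{p}(0,T;X_{0})$ since $X_{1}\hookrightarrow X_{0}$ continuously, and verify (a) and (b).

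The tool linking the three spaces is the interpolation (Ehrling-type) inequality: for every $\eta>0$ there is $C_{\eta}>0$ with $\|v\|_{X_{0}}\le\eta\|v\|_{X_{1}}+C_{\eta}\|v\|_{X_{2}}$ for all $v\in X_{1}$. I would prove this by contradiction: if it fails for some $\eta_{0}$, choose $v_{n}\in X_{1}$ with $\|v_{n}\|_{X_{0}}>\eta_{0}\|v_{n}\|_{X_{1}}+n\|v_{n}\|_{X_{2}}$ and normalize $\|v_{n}\|_{X_{0}}=1$; then $\|v_{n}\|_{X_{1}}<\eta_{0}^{-1}$ is bounded and $\|v_{n}\|_{X_{2}}<n^{-1}\to0$. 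By compactness of $X_{1}\hookrightarrow X_{0}$ a subsequence converges in $X_{0}$ to some $v$, and by continuity of $X_{0}\hookrightarrow X_{2}$ also in $X_{2}$, so $v=0$, contradicting $\|v\|_{X_{0}}=1$. This is the only place the compactness hypothesis is used.

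For (a): H\"older's inequality gives $\|\int_{a}^{b}u\,dt\|_{X_{1}}\le(b-a)^{1-1/p}\|u\|_{L^{p}(0,T;X_{1})}\le(b-a)^{1-1/p}M$, so the means form a bounded subset of $X_{1}$, hence a relatively compact subset of $X_{0}$ by the compact embedding. For (b): applying the interpolation inequality pointwise in $t$ to $v=u(t+h)-u(t)$ and taking $L^{p}$ norms yields $\|u(\cdot+h)-u(\cdot)\|_{L^{p}(0,T-h;X_{0})}\le2\eta M+C_{\eta}\|u(\cdot+h)-u(\cdot)\|_{L^{p}(0,T-h;X_{2})}$. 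The last term I would control by the key estimate $\|u(\cdot+h)-u(\cdot)\|_{L^{p}(0,T-h;X_{2})}\le2h^{\alpha}[u]_{W^{\alpha,p}(0,T;X_{2})}$: since $h^{1+\alpha p}\ge|\,\cdot\,|^{1+\alpha p}$ on intervals of length at most $h$, one writes $\|u(t+h)-u(t)\|^{p}\le2^{p-1}(\|u(t+h)-u(r)\|^{p}+\|u(r)-u(t)\|^{p})$, averages over $r\in(t,t+h)$, bounds $\|u(t+h)-u(r)\|^{p}\le h^{1+\alpha p}\|u(t+h)-u(r)\|^{p}/|t+h-r|^{1+\alpha p}$ (and symmetrically for the other term), and integrates in $t$ to recover a piece of the Gagliardo seminorm. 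Hence $\limsup_{h\to0}\sup_{u\in\mathcal{F}}\|u(\cdot+h)-u(\cdot)\|_{L^{p}(0,T-h;X_{0})}\le2\eta M$, and letting $\eta\to0$ gives (b). Conditions (a) and (b) then deliver relative compactness of $\mathcal{F}$ in $L^{p}(0,T;X_{0})$, which is the asserted statement.

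The main obstacle is organizational rather than deep: one must invoke the Bochner-space version of the Riesz--Fr\'echet--Kolmogorov criterion (equivalently, cite the compactness results of Simon), being careful that on a bounded time interval with $1<p<\infty$ no additional decay condition near $t=T$ is required, and one must carry out the short covering computation that converts the $W^{\alpha,p}$-seminorm bound into a uniform $L^{p}$-modulus-of-continuity estimate in $X_{2}$. The remaining ingredients are H\"older's inequality and the contradiction argument for the interpolation inequality.
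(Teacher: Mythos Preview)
The paper does not prove this lemma: it is simply recorded in the Appendix as a classical result (the Aubin--Lions--Simon compactness theorem), so there is no proof in the paper to compare against. Your proposal supplies what the authors omit, and it is correct in both strategy and detail: you follow the standard route of Simon (\emph{Compact sets in the space} $L^p(0,T;B)$, Ann.~Mat.~Pura Appl.~1987), namely the vector-valued Riesz--Fr\'echet--Kolmogorov criterion together with Ehrling's interpolation inequality. Your contradiction argument for Ehrling is the usual one, the verification of relative compactness of time-averages via boundedness in $X_1$ is immediate, and your averaging computation turning the Gagliardo seminorm into the translation estimate $\|u(\cdot+h)-u(\cdot)\|_{L^{p}(0,T-h;X_{2})}\le 2h^{\alpha}[u]_{W^{\alpha,p}}$ is correct (the constant $2$ indeed comes out of the $2^{p-1}$ splitting and the two symmetric pieces). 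Nothing is missing.
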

\begin{theorem}{\rm (The Vitali convergence theorem)} \label{6.2} Let $p\geq 1$, $\{X_n\}_{n\geq 1}\in L^p$ and $X_n\rightarrow X$ in probability. Then, the following are equivalent,\\
{\rm (1)} $X_n\rightarrow X$;\\
{\rm (2)} the sequence $|X_n|^p$ is uniformly integrable;\\
{\rm (3)} $\mathbb{E}|X_n|^p\rightarrow \mathbb{E}|X|^p$.
\end{theorem}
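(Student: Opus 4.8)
The plan is to route the three equivalences through $L^p$-convergence, proving $(1)\Leftrightarrow(2)$ and $(1)\Leftrightarrow(3)$; throughout, ``$X_n\to X$'' in item (1) is read as $\mathbb{E}|X_n-X|^p\to 0$, and I will use freely that a sequence converging in probability has a subsequence converging almost surely, together with the inequality $|a-b|^p\le 2^{p-1}(|a|^p+|b|^p)$. The implication $(1)\Rightarrow(3)$ is immediate from the reverse triangle inequality $\bigl|(\mathbb{E}|X_n|^p)^{1/p}-(\mathbb{E}|X|^p)^{1/p}\bigr|\le(\mathbb{E}|X_n-X|^p)^{1/p}\to 0$, which also shows $X\in L^p$. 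For $(1)\Rightarrow(2)$ I would show that an $L^p$-convergent sequence has uniformly integrable $p$-th powers: given $\delta>0$, pick $N$ with $\mathbb{E}|X_n-X|^p<\delta$ for $n>N$, use that the finite family $\{|X_1|^p,\dots,|X_N|^p,|X|^p\}$ is uniformly integrable, and for $n>N$ control $\mathbb{E}[|X_n|^p\mathbf{1}_{\{|X_n|^p>K\}}]$ by a standard truncation based on $|X_n|^p\le 2^{p-1}(|X_n-X|^p+|X|^p)$.

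For the converse $(2)\Rightarrow(1)$: uniform integrability gives $\sup_n\mathbb{E}|X_n|^p<\infty$, so Fatou's lemma along an almost surely convergent subsequence yields $\mathbb{E}|X|^p\le\liminf_n\mathbb{E}|X_n|^p<\infty$, hence $X\in L^p$. Then $\{|X_n-X|^p\}_n$ is uniformly integrable (being dominated by $2^{p-1}(|X_n|^p+|X|^p)$) and converges to $0$ in probability, so (1) follows from the key lemma: \emph{a non-negative sequence $Y_n\to 0$ in probability that is uniformly integrable satisfies $\mathbb{E}Y_n\to 0$}. One proves this by splitting $\mathbb{E}Y_n=\mathbb{E}[Y_n\mathbf{1}_{\{Y_n\le K\}}]+\mathbb{E}[Y_n\mathbf{1}_{\{Y_n>K\}}]$, making the second term small uniformly in $n$ for large $K$ by the definition of uniform integrability, and the first small for large $n$ since $Y_n\mathbf{1}_{\{Y_n\le K\}}$ is bounded by $K$ and tends to $0$ in probability.

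Finally, $(3)\Rightarrow(1)$ is a Scheffé-type argument: set $g_n:=2^{p-1}(|X_n|^p+|X|^p)-|X_n-X|^p\ge 0$, note that $g_n\to 2^p|X|^p$ in probability, and apply Fatou's lemma to get $2^p\mathbb{E}|X|^p\le\liminf_n\mathbb{E}g_n=2^p\mathbb{E}|X|^p-\limsup_n\mathbb{E}|X_n-X|^p$, where hypothesis (3) was used in the last equality; this forces $\mathbb{E}|X_n-X|^p\to 0$. The main obstacle is the lemma inside $(2)\Rightarrow(1)$: it is the only place where uniform integrability is used essentially rather than for bookkeeping, and one must fix the truncation level $K$ before sending $n\to\infty$. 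Everything else is a one-line triangle-inequality estimate or a routine application of Fatou along almost surely convergent subsequences — the one recurring care point being to secure $X\in L^p$ first in the directions that assume (2) or (3), which Fatou provides.
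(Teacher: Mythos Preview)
The paper does not actually prove this theorem: it is listed in the Appendix as a classical result that is quoted without proof, so there is no ``paper's own proof'' to compare against. Your proposal is a correct and standard proof of the Vitali convergence theorem, with the natural reading that item~(1) means $L^p$-convergence.

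One small remark on presentation: in the step $(3)\Rightarrow(1)$ you invoke Fatou's lemma for a sequence $g_n$ that converges to $2^p|X|^p$ only in probability, not almost surely. You clearly know the fix, since you mention elsewhere ``Fatou along almost surely convergent subsequences,'' but it would be cleaner to make this explicit at that point too: one passes to a subsequence realising $\limsup_n\mathbb{E}|X_n-X|^p$, then to a further a.s.-convergent subsequence, and applies Fatou there. With that caveat spelled out, every step goes through.
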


\begin{theorem} {\rm (The Skorokhod representative theorem)} Let $X$ be a Polish space. For an arbitrary sequence of probability measures $\{\nu_n\}_{n\geq 1}$ on $\mathcal{B}(X)$ weakly convergence to a probability measure $\nu$, there exists a probability space $(\Omega, \mathcal{F}, \mathbb{P})$ and a sequence of random variables $u_n, u$ with laws $\nu_n$, $\nu$, respectively and $u_n\rightarrow u$, $\mathbb{P}$ a.s. as $n\rightarrow \infty$.
\end{theorem}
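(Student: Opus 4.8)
This is the classical Skorokhod representation theorem for Polish spaces, and in practice one simply invokes a standard reference (Billingsley, Dudley); I nonetheless sketch the construction I would follow. The plan is to realize every $u_n$ together with $u$ on the single probability space $([0,1],\mathcal{B}([0,1]),\mathrm{Leb})$ and to build the coupling level by level along an increasing sequence of finite Borel partitions of $X$ into $\nu$-continuity sets of shrinking diameter.

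First I would fix the partitions. Since $X$ is Polish and $\nu$ a Borel probability measure, $\nu$ is tight. For each $m\ge 1$ I cover $X$ by countably many open balls of radius $<2^{-m}$ (using separability) and, perturbing each radius slightly, arrange that every chosen ball has $\nu$-null boundary (only countably many radii fail this). Disjointifying, intersecting with the partition from level $m-1$, and collapsing the cells carrying negligible total $\nu$-mass into one cell, I obtain a finite Borel partition $\mathcal{P}_m=\{A^{(m)}_1,\dots,A^{(m)}_{k_m}\}$ with $\mathrm{diam}\,A^{(m)}_j<2^{-m}$, $\nu(\partial A^{(m)}_j)=0$, and $\mathcal{P}_{m+1}$ refining $\mathcal{P}_m$. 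By the portmanteau theorem, $\nu_n(A^{(m)}_j)\to\nu(A^{(m)}_j)$ as $n\to\infty$ for every $j$ and $m$.

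Next comes the coupling. I subdivide $[0,1]$ into consecutive intervals $\{I^{(m)}_j\}_j$ with $|I^{(m)}_j|=\nu(A^{(m)}_j)$, nested so that $I^{(m+1)}_{j'}\subset I^{(m)}_j$ whenever $A^{(m+1)}_{j'}\subset A^{(m)}_j$; I then define $u(\omega)$ to be the unique point of $\bigcap_m \overline{A^{(m)}_{j_m(\omega)}}$, where $\omega\in I^{(m)}_{j_m(\omega)}$ (a single point, by completeness of $X$ and vanishing diameters), and since the $\partial A^{(m)}_j$ are $\nu$-null this $u$ has law $\nu$. For each $n$ I repeat the construction with $\nu_n$ in place of $\nu$, but at each level I choose the subdivision (respecting the nesting) so as to maximize the Lebesgue overlap with $\{I^{(m)}_j\}_j$; the cumulative overlap through level $m$ can be made at least $1-\delta_{n,m}$ with $\delta_{n,m}\to0$ as $n\to\infty$ for each fixed $m$, so outside a set of measure $\delta_{n,m}$ the points $u_n(\omega)$ and $u(\omega)$ occupy the same cell of $\mathcal{P}_m$, while on the residual set the values of $u_n$ are filled in so that its law is exactly $\nu_n$.

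Finally, convergence: whenever $\omega$ lies in the level-$m$ overlap we have $u_n(\omega),u(\omega)\in\overline{A^{(m)}_{j_m(\omega)}}$, hence $d(u_n(\omega),u(\omega))\le 2^{-m}$, which already gives $u_n\to u$ in probability. To upgrade this to almost sure convergence I would, as in the textbook proof, run the level-$m$ construction only for indices $n$ beyond a threshold $N_m$ chosen to grow so fast that the exceptional sets are summable and Borel--Cantelli applies, letting the effective level $m=m(n)$ diverge with $n$. Organizing this diagonal schedule while keeping all partitions and all interval subdivisions genuinely nested across levels is the delicate bookkeeping, and it is the main obstacle of the argument; the only place the possible non-compactness of $X$ intervenes is in the first step, where tightness of $\nu$ is what legitimizes truncating each $\mathcal{P}_m$ to finitely many cells.
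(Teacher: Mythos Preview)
The paper does not actually prove this statement: it is listed in the Appendix as a classical result (alongside the Aubin--Lions lemma and the Vitali convergence theorem) and is invoked without proof. So there is no ``paper's own proof'' to compare against; the authors simply cite the theorem as known.

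Your sketch is the standard Dudley--Billingsley construction and is correct in outline. A couple of points are left vague: the step where you define $u(\omega)$ as ``the unique point of $\bigcap_m \overline{A^{(m)}_{j_m(\omega)}}$'' needs the observation that this nested intersection of closed sets with diameters tending to zero is nonempty (Cantor's theorem in a complete metric space) and is a singleton; and the verification that the resulting $u$ has law exactly $\nu$ deserves a line (it follows because $\nu(A^{(m)}_j)=|I^{(m)}_j|$ and these cylinder events generate $\mathcal B(X)$). The ``maximize the Lebesgue overlap'' description for the $u_n$ construction is informal but captures the right idea. You are also right that the diagonal bookkeeping---choosing thresholds $N_m$ so that the exceptional sets are summable---is the main technical nuisance.

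In short: your proposal is appropriate and matches what a referee would expect, namely a pointer to a standard reference with, at most, a sketch; the paper itself does even less and just states the theorem.
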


\section*{Acknowledgments}
The authors thank professor Dehua Wang for useful discussions. Z. Qiu's research was supported by the China Scholarship Council under grant No.201806160015.

\end{document}